\newtheorem{theo}{Theorem}[section]
\newtheorem{lemm}[theo]{Lemma}
\newtheorem{prop}[theo]{Proposition}
\newtheorem{coro}[theo]{Corollary}
\theoremstyle{definition}
\newtheorem{exam}[theo]{Example}
\theoremstyle{remark}
\newtheorem{rema}[theo]{Remark}
\numberwithin{equation}{section}
\newcommand{\Ker}{\mathrm{Ker}}
\newcommand{\Coker}{\mathrm{Coker}}
\newcommand{\Img}{\mathrm{Im}}
\newcommand{\ind}{\mathrm{ind}}
\newcommand{\lra}{\longrightarrow}
\newcommand{\CC}{\mathbb{C}}
\newcommand{\QQ}{\mathbb{Q}}
\newcommand{\RR}{\mathbb{R}}
\newcommand{\Z}{\mathbb{Z}}
\newcommand{\R}{\mathbb{R}}
\newcommand{\Aa}{\mathcal{A}}
\newcommand{\Hh}{\mathcal{H}}
\newcommand{\Ii}{\mathcal{I}}
\newcommand{\del}{\partial}
\newcommand{\delb}{{\bar \partial}}
\newcommand{\mub}{{\bar \mu}}
\newcommand{\dirac}{\slashed{\partial}}
\newcommand{\dR}{\mathrm{dR}}
\title{Hodge-de Rham numbers of almost complex 4-manifolds}
\author[J. Cirici]{Joana Cirici}
\address[J. Cirici]{Departament de Matemàtiques i Informàtica, Universitat de Barcelona\\
Gran Via 585\\
08007 Barcelona, Spain  / Centre de Recerca Matemàtica, Edifici C, Campus Bellaterra, 08193 Bellaterra, Spain}
\email{jcirici@ub.edu}
\author[S. Wilson]{Scott O. Wilson}
  \address[S. Wilson]{Department of Mathematics, Queens College, City University of New York, 65-30 Kissena Blvd., Flushing, NY 11367}
  \email{scott.wilson@qc.cuny.edu}
\thanks{
J. Cirici acknowledges the Serra H\'{u}nter Program. 
Her work was also partially funded by the Spanish State Research Agency (Mar\'{i}a de Maeztu Program CEX2020-001084-M and I+D+i project PID2020-117971GB-C22/MCIN/AEI/10.13039/501100011033) as well as by the
French National Research Agency (ANR-20-CE40-0016). 
S. O. Wilson acknowledges support provided by a PSC-CUNY Award, jointly funded by The Professional
Staff Congress and The City University of New York (TRADB  \#  63528-00 51).
}
\begin{document}

\maketitle

\begin{abstract}
We introduce and study Hodge-de Rham numbers for compact almost complex 4-manifolds, generalizing the Hodge numbers of a complex surface. The main properties of these numbers in the case of complex surfaces are extended to this more general setting, and it is shown that all Hodge-de Rham numbers for compact almost complex 4-manifolds are determined by the cohomology, except for one (the irregularity). Finally, these numbers are shown to prohibit the existence of complex structures on certain manifolds, without reference to the classification of surfaces.  
\end{abstract}

\tableofcontents

\section{Introduction}
Hodge numbers are fundamental analytic invariants of complex manifolds.
They arise as the dimensions $h^{p,q}:=\dim H^{p,q}_\delb$ of the Dolbeault cohomology groups 
which, by Dolbeault's Theorem, admit a sheaf-theoretic description using holomorphic differential forms.
In the compact case, Hodge theory gives an isomorphism between Dolbeault cohomology and the 
spaces of $\delb$-harmonic forms, defined after choosing a Hermitian metric.
This isomorphism gives the following \textit{Serre duality identities} for any compact complex manifold of complex dimension $n$:

\begin{enumerate}[label={(I.\arabic*)}, topsep=2pt,itemsep=4pt,partopsep=4pt, parsep=4pt]\setcounter{enumi}{0}
\item \label{ISerre} $h^{0,0}=h^{n,n}=1$ and $h^{p,q}=h^{n-p,n-q}$ for all $1\leq p,q\leq n$.
\end{enumerate}

Dolbeault cohomology is related to the topology of the manifold by means of a spectral sequence,
called the \textit{Fr\"{o}licher spectral sequence}.
One consequence of the convergence of this spectral sequence in the compact case is the \textit{Euler characteristic identity}:

\begin{enumerate}[label={(I.\arabic*)}, topsep=2pt,itemsep=4pt,partopsep=4pt, parsep=4pt]\setcounter{enumi}{1}
\item \label{Ieuler} $e:=\sum (-1)^k b^k=\sum (-1)^{p+q} h^{p,q},$
\end{enumerate}
where $b^k$ denotes the $k$-th Betti number of the manifold.

Together with plurigenera, Hodge numbers are the most important invariants in the Enriques-Kodaira classification of compact complex surfaces. Many other invariants are written as linear combinations of Hodge numbers, such as:
\begin{itemize}[topsep=2pt,itemsep=1pt,partopsep=4pt, parsep=4pt]
 \item (\textit{Irregularity}) $\mathfrak{q}:=h^{0,1}$.
 \item (\textit{Geometric genus}) $p_g:=h^{0,2}$.
  \item (\textit{Holomorphic Euler characteristic}) $\chi:=\sum (-1)^q h^{0,q}=1-\mathfrak{q}+p_g$.
\end{itemize}

The above invariants are defined in general for any compact complex manifold.
In the case of compact complex surfaces, which are complex manifolds of complex dimension 2, the following strong relations are satisfied:

\begin{enumerate}[label={(I.\arabic*)},topsep=2pt,itemsep=1pt,partopsep=4pt, parsep=4pt]\setcounter{enumi}{2}
\item \label{IBetti} (\textit{Degeneration}) $b^k=\sum_{p+q=k} h^{p,q}$.
 \item \label{Isignature}(\textit{Signature}) $\sigma=4\chi-e=\sum (-1)^q h^{p,q}$.
 \item \label{Inoether}(\textit{Noether's formula}) $12\chi=c_1^2+e$.
\end{enumerate}
Here $\sigma:=b^+-b^-$ denotes the signature of the manifold and $c_1$ is the first Chern class, 
whose square is a topological invariant.
The above relations altogether imply that Hodge numbers of compact complex surfaces are determined 
from the Betti numbers of the manifold and the signature.
In particular, they depend only on the oriented real cohomology ring of the manifold.

\medskip

The purpose of this note is to introduce and study a generalization of Hodge numbers that is valid for any almost complex 4-manifold.
The starting point is the Fr\"{o}licher-type spectral sequence introduced in \cite{CWDol} for any almost complex manifold and the corresponding Dolbeault cohomology groups arising as the first stage of this spectral sequence.
For a compact almost complex 4-manifold, the Dolbeault cohomology vector spaces 
introduced in \cite{CWDol} are always finite-dimensional for the bottom $(*,0)$ and top $(*,2)$ bidegrees, respectively, and are identified with certain spaces of harmonic forms.
However, in bidegree $(0,1)$ such spaces have no supporting Hodge theory and, as shown in \cite{CPS},
are infinite dimensional in the non-integrable case.
Still, the Fr\"{o}licher spectral sequence always degenerates by the second page $E_2$, at the latest.
It therefore makes sense to consider the \textit{Hodge-de Rham numbers} 
\[h^{p,q}:=\dim E_2^{p,q}=\dim E_\infty^{p,q},\]
which are analytic invariants of the manifold, and always finite in the compact case. Of course, these numbers trivially satisfy
Equations \ref{Ieuler} and \ref{IBetti}, and reduce to the classical Hodge numbers in the case of a complex surface.
We show that the Serre identities \ref{ISerre} are also satisfied, and
provide relations generalizing Equations \ref{Isignature} and \ref{Inoether}.
 
The presentation below will not assume any previous knowledge on complex surfaces, but rather 
we will prove from first principles the general properties for almost complex 4-manifolds, and observe how these
collapse to the well-known properties mentioned above in the case of complex surfaces. Additionally, we prove special properties for non-integrable structures.
This exercise allows us to describe with precision which of the properties are special, and only satisfied in the case of complex surfaces, and which of the properties still hold in the non-integrable case, even if their implications are not as dramatic as in the integrable setting. As one somewhat surprising result of this, we show that for non-integrable structures, all Hodge-de Rham numbers may be computed from the Betti numbers together with the irregularity $\mathfrak{q}:=h^{0,1}$. Moreover, this number is lower semi-continuous under small deformations and is bounded by $b^1\leq 2\mathfrak{q}\leq 2b^1$. In particular, when $b^1\leq 1$, the irregularity and hence all Hodge-de Rham numbers become topological invariants.

There are many known examples of almost complex 4-manifolds not admitting any integrable almost complex structure. The arguments for proving that such example exist often rely on Kodaira's classification of compact complex surfaces. Using Hodge-de Rham numbers we show how, in many situations, one can prove such results without invoking Kodaira's classification.

Another natural approach for generalizing Hodge numbers to the non-integrable setting arises after choosing
a Hermitian metric and considering the spaces of $\delb$-harmonic forms $\Hh_\delb^{p,q}:=\Ker(\Delta_\delb)|_{p,q}$, as already noted by Hirzebruch in \cite{Hirzebruch}. For compact almost complex manifolds, these spaces are finite-dimensional by elliptic operator theory, even in the non-integrable case, and their dimensions $h^{p,q}_\delb:=\dim \Hh_\delb^{p,q}$ 
satisfy the Serre duality identities. We have $h^{0,0}_\delb=h^{0,0}$ as well as inequalities 
$h^{p,0}_\delb\geq h^{p,0}$ for $p=1,2$, which give the same relations for the Serre-dual numbers. However, there is no obvious relation between the numbers $h^{p,1}_\delb$ and $h^{p,1}$. In fact, as shown by Holt and Zhang in \cite{HZ}, the number $h^{0,1}_\delb$ depends on the metric in general and may be unbounded as the metric varies, while $h^{0,1}$ is metric-independent by construction. 
Additionally, the number $h^{1,1}_\delb$ also depends on the metric in general, \cite{TaTo4}, 
but attains only two possible values \cite{Holt}.

This note is organized as follows.
In Section \ref{SecPrel} we review some preliminaries on the algebra of differential forms of an almost complex manifold. We prove some first basic properties of such differential forms, and review the Fr\"{o}licher spectral sequence of almost complex manifolds.
In Section \ref{SecHodge} we introduce Hodge-de Rham numbers of almost complex 4-manifolds and provide generalizations of the identitites \ref{ISerre}-\ref{Inoether}. In Section \ref{secinte} we study separately the integrable and non-integrable cases and establish a criterion of integrability in terms of Hodge theory. The last section is devoted to examples.

\subsection*{Acknowledgments}
We would like to thank Michael Albanese for sharing a draft of his appendix in \cite{DG}, as well as illuminating discussions.
We also thank Claude LeBrun for pointing us to the reference \cite{Aro}, and the referee for his useful comments.

\section{Preliminaries} \label{SecPrel}

\subsection{Differential forms of almost complex manifolds}
An \textit{almost complex structure} on a smooth manifold $M$ is given by an endomorphism  of the tangent bundle $J:TM\to TM$ such that $J^2=-\mathrm{Id}$. The pair $(M,J)$ is called an \textit{almost complex manifold}.
The extension of $J$ to the complexiﬁed
tangent bundle induces a bigrading on the complex de Rham algebra of differential forms
\[\Aa^{n}_{\dR}\otimes\CC=\bigoplus_{p+q=n}\Aa^{p,q}\]
and the exterior differential decomposes as
\[d=  \mub+\delb+\del+\mu\]
where the bidegrees of each component are given by 
\[
|\mub|=(-1,2), \,  |\delb|=(0,1), \, |\del|=(1,0), \text{ and } \,  |\mu|=(2,-1).
\]
Here $\mub$ and $\delb$ are complex conjugate to $\mu$ and $\del$ respectively.
The almost complex structure $J$ is said to be \textit{integrable} if and only if $\mu\equiv \mub\equiv 0$. In this case, by the Newlander-Nirenberg Theorem, the endomorphism $J$ is induced from a holomorphic atlas and so $M$ is a complex manifold.

The equation $d^2=0$ gives the following set of relations:
\begin{align*}
\mu^2 = 0 \\
\mu \del + \del \mu =  0 \\
\mu \delb + \delb \mu +\del^2 =  0 \\
\mu \mub + \del \delb + \delb \del + \mub \mu  = 0 \\ 
\mub \del + \del \mub +\delb^2 =  0 \\
\mub \delb + \delb \mub =  0 \\
\mub^2 = 0
\end{align*}
In the integrable case, for which $d=\delb+\del$, these equations collapse to the well-known equations of a double complex
$\del^2=0$, $\del\delb+\delb\del=0$ and $\delb^2=0$.

The following results are extensions, to the possibly non-integrable case, of well-known results for compact complex surfaces.
We first recall a result on pluriharmonic functions which relies on the Hopf maximum principle
(see Corollary 1 of \cite{CWAH} for a proof in the possibly non-integrable case).

\begin{lemm}\label{constant}Let $f$ be a smooth function on a compact connected almost complex manifold such that $\delb\del f=0$. Then $f$ is constant. In particular, $f$ is constant if $\delb f =0$.
\end{lemm}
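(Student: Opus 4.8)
The plan is to extract from the hypothesis $\delb\del f=0$ a second-order elliptic equation for $f$ with no zeroth-order term, and then to invoke the Hopf (strong) maximum principle on the compact connected manifold. First I would reduce to a real-valued function. Since $\delb$ and $\del$ are complex conjugate, conjugating the $(1,1)$-form $\delb\del f$ gives $\overline{\delb\del f}=\del\delb\ov f$, while the $(1,1)$-component of $d^2=0$ applied to a function yields $\delb\del g+\del\delb g=0$ for every smooth $g$ (the $\mu\mub$ and $\mub\mu$ terms vanish on $0$-forms). Combining the two, $\delb\del\ov f=-\overline{\delb\del f}$, so $\delb\del f=0$ forces $\delb\del\ov f=0$ and hence $\delb\del(\mathrm{Re}\,f)=\delb\del(\mathrm{Im}\,f)=0$. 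It therefore suffices to treat real $f$, for which $i\,\delb\del f$ is a real $(1,1)$-form.

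Next I would fix a Hermitian metric with fundamental form $\omega$ and apply the trace $\Lambda$ (contraction with $\omega$) to $i\,\delb\del f$. In a local unitary coframe $\{\theta^j\}$ of $(1,0)$-forms, writing $\del f=\sum_j f_j\,\theta^j$, one computes $\delb(f_j\theta^j)=\delb f_j\wedge\theta^j+f_j\,\delb\theta^j$, so that
\[
\delb\del f=-\sum_{j,k}(f_j)_{\ov k}\,\theta^j\wedge\ov\theta^k+\sum_j f_j\,\delb\theta^j,
\]
where $(f_j)_{\ov k}$ are the $(0,1)$-derivatives and $\delb\theta^j$ is the $(1,1)$-part of $d\theta^j$. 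Taking the trace, the first sum contributes $\sum_j(f_j)_{\ov j}$, whose principal symbol is a nonzero multiple of that of the Laplacian, while the second sum is linear in the first derivatives $f_j$ with coefficients built from the fixed torsion forms $\delb\theta^j$, hence contributes only first-order terms. Thus $\Lambda(i\,\delb\del f)=\tilde L f$ for a second-order elliptic operator $\tilde L$, and since $\tilde L(1)=\Lambda(i\,\delb\del 1)=0$ the operator has no zeroth-order term.

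The hypothesis then gives $\tilde L f=0$. On the compact connected manifold $f$ attains its maximum at some point, and because $\tilde L$ is elliptic with no zeroth-order term the strong maximum principle forces $f$ to be constant near that point; the locus where $f$ equals its maximum is then open and closed, so by connectedness it is all of $M$ and $f$ is constant. The ``in particular'' statement is immediate from the $(1,1)$-relation: if $\delb f=0$ then $\delb\del f=-\del\delb f=0$, and the main assertion applies.

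The hard part is controlling the torsion terms $\sum_j f_j\,\delb\theta^j$ that appear in the non-integrable case. In the integrable setting one chooses holomorphic coordinates so that the coframe is $\delb$-closed and these terms disappear, leaving exactly the complex Hessian and hence (a multiple of) the Laplacian; here one must instead check that they are genuinely first-order in $f$, so that the ellipticity of $\tilde L$ and the vanishing of its zeroth-order term—the two facts the maximum principle needs—are preserved.
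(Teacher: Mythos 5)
Your proposal is correct and takes essentially the same approach as the paper, which does not prove the lemma in-text but cites Corollary 1 of \cite{CWAH}, noting explicitly that the result ``relies on the Hopf maximum principle'': that argument is precisely yours, namely tracing $i\,\delb\del f$ against a Hermitian metric to produce a second-order elliptic operator with no zeroth-order term and concluding via the strong maximum principle. Your reduction to real-valued $f$ via the relation $\delb\del + \del\delb = 0$ on functions and your handling of the first-order torsion terms $\sum_j f_j\,\delb\theta^j$ are both sound, so the proof stands as written.
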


A basic result for compact complex surfaces states that every holomorphic form is closed (see Lemma IV.2.1 of \cite{BaHu}). More generally, we have:
\begin{lemm}\label{holomclosed}
 Let $\alpha\in \Aa^{p,0}$ be a differential form on a compact almost complex 4-manifold
 such that $\delb\alpha=\mub\alpha=0$. Then $d\alpha=0$.
\end{lemm}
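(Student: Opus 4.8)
The plan is to expand $d\alpha$ according to the decomposition $d = \mub + \delb + \del + \mu$ and then exploit both hypotheses together with the bidegree constraints available in complex dimension $2$. Writing $\alpha \in \Aa^{p,0}$, the component $\mu\alpha$ lies in bidegree $(p+2,-1)$ and so vanishes automatically, while $\delb\alpha$ and $\mub\alpha$ vanish by assumption. Thus $d\alpha = \del\alpha \in \Aa^{p+1,0}$, and the statement reduces to proving $\del\alpha = 0$. For $p=0$ this is immediate from Lemma \ref{constant}: the function $\alpha$ is constant, so in fact $d\alpha=0$ directly. For $p=2$ one has $\del\alpha \in \Aa^{3,0}=0$ for free. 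Hence the only real content is the case $p=1$.

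For $p=1$ I would prove $\del\alpha=0$ by an integration argument. Since $\del\alpha \in \Aa^{2,0}$ is of top holomorphic degree, the $(2,2)$-form $\del\alpha \wedge \ov{\del\alpha}$ is, up to a fixed nonzero constant, equal to $|\del\alpha|^2$ times the volume form, so it is pointwise nonnegative and vanishes exactly where $\del\alpha$ does. It therefore suffices to show $\int_M \del\alpha \wedge \ov{\del\alpha} = 0$. The idea is to realize this integrand as an exact form modulo an error term that I can force to vanish. Using $d\alpha = \del\alpha$ and Stokes' theorem on the closed manifold $M$,
\[
0 = \int_M d\big(\alpha \wedge \ov{\del\alpha}\big) = \int_M \del\alpha \wedge \ov{\del\alpha} \;-\; \int_M \alpha \wedge d\,\ov{\del\alpha},
\]
so the whole argument comes down to showing that the second integral is zero.

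The key computation uses $\ov{\del\alpha} = \delb\ov\alpha$, where $\ov\alpha \in \Aa^{0,1}$, and the fact that conjugating the hypotheses gives $\del\ov\alpha = 0$ and $\mu\ov\alpha = 0$. Tracking bidegrees in $d\,\ov{\del\alpha} = d\,\delb\ov\alpha$, only the summand $\del\delb\ov\alpha$ survives after wedging with $\alpha$ (the $\mu$-type term lands in $\Aa^{3,1}=0$), so $\alpha \wedge d\,\ov{\del\alpha} = \alpha \wedge \del\delb\ov\alpha$. The fourth structure relation $\mu\mub + \del\delb + \delb\del + \mub\mu = 0$ then yields
\[
\del\delb\ov\alpha = -\,\delb\del\ov\alpha - \mu\mub\ov\alpha - \mub\mu\ov\alpha,
\]
and each term on the right vanishes: $\delb\del\ov\alpha = 0$ since $\del\ov\alpha = 0$; $\mub\mu\ov\alpha = 0$ since $\mu\ov\alpha = 0$; and $\mu\mub\ov\alpha = 0$ since $\mub\ov\alpha \in \Aa^{-1,3}=0$. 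Hence $\del\delb\ov\alpha = 0$, the error integral vanishes, $\int_M \del\alpha \wedge \ov{\del\alpha} = 0$, and so $\del\alpha = 0$.

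The main obstacle is precisely this non-integrable error term. In the complex-surface case the relation $\del\delb + \delb\del = 0$ kills $\del\delb\ov\alpha$ outright, but here one must verify that the extra operators $\mu$ and $\mub$ do not obstruct the vanishing. This is exactly where the second hypothesis $\mub\alpha = 0$ (equivalently $\mu\ov\alpha = 0$) enters — it is not needed at all in the integrable setting — and where the restriction to real dimension $4$ is essential, since several of the potential obstruction terms are discarded only because they would live in forbidden bidegrees.
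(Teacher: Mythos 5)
Your proof is correct and follows essentially the same route as the paper: reduce to $p=1$ via Lemma \ref{constant} and bidegree reasons, then show $\int \del\alpha\wedge\ov{\del\alpha}=0$ by Stokes and conclude from positive definiteness of the pairing $(\alpha,\beta)\mapsto\int\alpha\wedge\ov{\beta}$ on $(2,0)$-forms. The only difference is cosmetic: since $\ov{\del\alpha}=\ov{d\alpha}=d\ov{\alpha}$, the paper gets $d\,\ov{\del\alpha}=0$ immediately from $d^2=0$ (writing the integrand as the exact form $d(\alpha\wedge d\ov{\alpha})$), whereas your structure-relation computation unrolls by hand exactly the $(1,2)$-component of $d^2\ov{\alpha}=0$.
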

\begin{proof}
 The case $p=0$ follows from Lemma \ref{constant} and the case $p=2$ is trivial for bidegree reasons. The case $p=1$ is an easy consequence of Stokes Theorem (c.f. Lemma 3.13 in \cite{CWDol}). Indeed, by assumption we have 
  $d \alpha = \del \alpha$ is a $(2,0)$-form, while on the other hand
 \[\int d\alpha\wedge d\overline{\alpha}=\int d(\alpha\wedge d\overline \alpha)=0.\]
 Since the pairing $(\alpha,\beta) = \int \alpha \wedge \bar \beta$ defines a positive definite inner product on $(2,0)$-forms, 
 we deduce $\del \alpha=0$, and so $\alpha$ is $d$-closed. 
\end{proof}

The above result is actually valid for forms of type $(n,0)$ or $(n-1,0)$ on any compact $2n$-dimensional 
almost complex manifold, but not in general for $(p,0)$-forms.
Lemma IV.2.3 in \cite{BaHu} extends now easily to non-integrable structures:

\begin{lemm}\label{vanishes}Let $\alpha\in \Aa^{p,0}$ be a differential form on a compact almost complex 4-manifold
 such that $\delb\alpha=\mub\alpha=0$. If $\alpha=\del \beta$ then $\alpha=0$.
\end{lemm}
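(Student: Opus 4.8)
The plan is to begin by invoking Lemma \ref{holomclosed}: the hypotheses $\delb\alpha=\mub\alpha=0$ on $\alpha\in\Aa^{p,0}$ are exactly those of that lemma, so we immediately obtain $d\alpha=0$, and hence $d\bar\alpha=0$ by conjugation. I would then split into the three possible cases $p=0,1,2$, since on a compact almost complex $4$-manifold a form of type $(p,0)$ is nonzero only for $0\le p\le 2$.

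The cases $p=0$ and $p=1$ should fall out directly from Lemma \ref{constant}. For $p=0$, the equation $\alpha=\del\beta$ forces $\beta\in\Aa^{-1,0}=0$, so $\alpha=0$ trivially. For $p=1$, the primitive $\beta\in\Aa^{0,0}$ is a smooth function with $\delb\del\beta=\delb\alpha=0$; Lemma \ref{constant} then makes $\beta$ constant, whence $\alpha=\del\beta=0$.

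The main case is $p=2$, where I would reproduce the integration-by-parts argument used for the $(1,0)$-case in Lemma \ref{holomclosed}. Writing $\alpha=\del\beta$ with $\beta\in\Aa^{1,0}$, I consider $\int\alpha\wedge\bar\alpha$, which is nonnegative and vanishes only for $\alpha=0$ by positive-definiteness of the pairing $(\alpha,\beta)=\int\alpha\wedge\bar\beta$ on $(2,0)$-forms. The crux is a bidegree count: decomposing $d\beta=\mub\beta+\delb\beta+\del\beta$ (the $\mu\beta$ term vanishing for bidegree reasons), only the summand $\del\beta\in\Aa^{2,0}$ survives when wedged against $\bar\alpha\in\Aa^{0,2}$, since $\mub\beta\wedge\bar\alpha\in\Aa^{0,4}$ and $\delb\beta\wedge\bar\alpha\in\Aa^{1,3}$ both vanish. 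Thus $\int\alpha\wedge\bar\alpha=\int\del\beta\wedge\bar\alpha=\int d\beta\wedge\bar\alpha$. Applying Stokes to $d(\beta\wedge\bar\alpha)=d\beta\wedge\bar\alpha-\beta\wedge d\bar\alpha$ and using $d\bar\alpha=0$ yields $\int d\beta\wedge\bar\alpha=0$, so $\int\alpha\wedge\bar\alpha=0$ and $\alpha=0$.

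I do not anticipate a serious obstacle: the argument runs entirely parallel to the one already executed in Lemma \ref{holomclosed}, and its only real content is the bidegree bookkeeping that isolates the $\del\beta$ contribution, combined with the vanishing $d\alpha=0$ that Lemma \ref{holomclosed} supplies. The single point to verify carefully is that conjugation carries $d$-closed $(2,0)$-forms to $d$-closed $(0,2)$-forms, so that the boundary term $\beta\wedge d\bar\alpha$ in the Stokes computation genuinely drops.
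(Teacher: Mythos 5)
Your proof is correct and follows essentially the same route as the paper: the case $p=1$ via $\delb\del\beta=0$ and Lemma \ref{constant}, and the case $p=2$ via the Stokes/positive-definiteness argument, which the paper compresses into ``arguing as in the proof of Lemma \ref{holomclosed}'' while you spell out the bidegree bookkeeping explicitly. Your additional observations --- the trivial $p=0$ case and the check that $d\bar\alpha=0$ by conjugating $d\alpha=0$ --- are correct and merely make explicit what the paper leaves implicit.
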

\begin{proof}
 Assume first that $\alpha\in \Aa^{1,0}$. If $\alpha=\del \beta$ then $\delb\del \beta=0$ and so $\alpha=0$ by Lemma \ref{constant}.
 If $\alpha\in \Aa^{2,0}$ and $\alpha=\del \beta$, we have
 \[\int \alpha\wedge \overline{\alpha}=0\]
 and arguing as in the proof of Lemma \ref{holomclosed} we find $\alpha=0$.
\end{proof}

\subsection{Fr\"{o}licher spectral sequence}
The \textit{Hodge filtration} of a complex manifold is given by the column filtration 
\[F^p\Aa^n:=\bigoplus_{q\geq p} \Aa^{q,n-q}.\]
In the non-integrable case, this filtration is not compatible with the exterior differential.
However, there is an elementary way to modify the usual Hodge filtration making it compatible with the exterior differential for all almost complex manifolds, and reducing to the Hodge filtration in the integrable case \cite{CWDol}. Namely, we simply restrict to those forms in the first column which are in the kernel of $\mub$, i.e. 
\[F^p\Aa^n:=\Ker(\mub)\cap \Aa^{p,n-p}\oplus\bigoplus_{q>p} \Aa^{q,n-q}.\]
One then verifies that $d(F^p\Aa^n)\subseteq F^p\Aa^{n+1}$ and so $(\Aa^*,d,F)$ is a filtered complex.
The first stage of its associated spectral sequence may be written as the quotient
\[E_1^{p,q}\cong {\{x\in \Aa^{p,q}; \mub x=0, \delb x=\mub y\}\over\{x=\mub a +\delb b; \mub b=0\}}
\]
with differential 
\[\delta_1[x]=[\del x-\delb y].\]
The second stage is given by the quotient
\[E_2^{p,q}\cong {\{{x\in \Aa^{p,q}; \mub x=0, \delb x=\mub y, \del x=\delb y+\mub z\}}\over{\{x=\mub a+\delb b+\del c; 0=\mub b+\delb c, 0=\mub c\}}}.\]
Further terms of this spectral sequence are described in the Appendix of \cite{CWDol}. For the study of 4-dimensional manifolds it suffices to understand the terms $E_1$ and $E_2$, as we will soon see.
This spectral sequence converges to complex de Rham cohomology: 
\[H_{\dR}^n\otimes \CC\cong \bigoplus_{p+q=n}E_\infty^{p,q}.\]
We have the following degeneration result:

\begin{lemm}\label{degeneration}On any compact almost complex 4-manifold we have:
\begin{enumerate}
 \item $E_1^{p,q}=E_\infty^{p,q}$ for all $0\leq p\leq 2$ and $q\in \{0,2\}$.
 \item $E_2^{p,1}=E_\infty^{p,1}$ for all $0\leq p\leq 2$.
\end{enumerate}
\end{lemm}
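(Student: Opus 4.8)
The plan is to reduce the three assertions to the vanishing of a short, explicit list of differentials, to dispatch the ``lower'' ones by the analytic method already used in Lemmas~\ref{holomclosed} and~\ref{vanishes}, and to obtain the ``upper'' ones by Serre duality. For the \emph{reduction}, note that on a $4$-manifold the only nonzero bidegrees lie in $0\le p,q\le 2$, whereas $\delta_r$ has bidegree $(r,1-r)$. For $r\ge 3$ the target column $p+r\ge 3$ is out of range, so $\delta_r=0$ and the spectral sequence degenerates at $E_3$; hence $E_3^{p,q}=E_\infty^{p,q}$ for all $(p,q)$. Among the remaining differentials, only the horizontal $\delta_1$ in each fixed row and the two maps $\delta_2\colon E_2^{0,1}\to E_2^{2,0}$ and $\delta_2\colon E_2^{0,2}\to E_2^{2,1}$ can be nonzero, since every other $\delta_2$ has source or target outside the range. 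Tracking which of these abut each bidegree, all three statements follow once we show: (i) $\delta_1=0$ on the rows $q=0$ and $q=2$; (ii) $\delta_2\colon E_2^{0,1}\to E_2^{2,0}$ vanishes; and (iii) $\delta_2\colon E_2^{0,2}\to E_2^{2,1}$ vanishes.

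For the \emph{lower differentials}, a class in $E_1^{p,0}$ is represented by $x\in\Aa^{p,0}$ with $\mub x=\delb x=0$ (the denominator vanishes for bidegree reasons), and $\delta_1[x]=[\del x]$. By Lemma~\ref{holomclosed} such an $x$ is $d$-closed, so $\del x=0$; thus $\delta_1=0$ on the row $q=0$, which already gives $E_1^{p,0}=E_2^{p,0}$. For (ii), a short computation with the $E_2$-conditions shows $\delta_2[x]=[w]$ with $w=\mu x-\del y=d(x-y)\in\Aa^{2,0}$, where $\mub x=0$, $\delb x=\mub y$ and $\del x=\delb y$. Being $d$-closed of bidegree $(2,0)$, $w$ has $\mub w=\delb w=0$, so it defines a class in $E_2^{2,0}$, whose denominator again vanishes by Lemma~\ref{holomclosed}; hence $[w]=0$ is equivalent to $w=0$. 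Now Stokes' theorem gives $\int w\wedge\ov{w}=\int d\big((x-y)\wedge \ov{w}\big)=0$, and positive-definiteness of the pairing on $(2,0)$-forms forces $w=0$. This is precisely the mechanism of Lemmas~\ref{holomclosed} and~\ref{vanishes}.

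Finally, the \emph{upper differentials} are the main obstacle: statement (i) for $q=2$ and statement (iii) live in the top row, where no positivity is available. I would obtain them by Serre duality for the spectral sequence, realised by the wedge pairing $\langle\alpha,\beta\rangle=\int_M\alpha\wedge\beta$ between complementary bidegrees $(p,q)$ and $(2-p,2-q)$. Stokes' theorem together with Lemma~\ref{holomclosed} shows this pairing descends to every page and is compatible with the differentials, so that $\delta_r$ on $E_r^{p,q}$ is, up to sign, adjoint to $\delta_r$ on $E_r^{2-p-r,\,1-q+r}$. Under this adjunction the top-row $\delta_1\colon E_1^{p,2}\to E_1^{p+1,2}$ pairs with the bottom-row $\delta_1\colon E_1^{1-p,0}\to E_1^{2-p,0}$, and $\delta_2\colon E_2^{0,2}\to E_2^{2,1}$ pairs with $\delta_2\colon E_2^{0,1}\to E_2^{2,0}$; both adjoints have just been shown to vanish. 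The crux, which I expect to be the main difficulty, is to prove that the induced pairing is \emph{perfect} on $E_1$ and $E_2$, i.e.\ genuine Serre duality at the level of the spectral sequence. This is where Hodge theory enters: the conjugate-linear Hodge star identifies the extreme-row groups ($q=0,2$) with spaces of harmonic forms, as in \cite{CWDol}, making the pairing non-degenerate there, and the finite-dimensionality of $E_2^{p,1}$ then transfers non-degeneracy to the middle row, yielding (iii).
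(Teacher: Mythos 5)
Your reduction to the three vanishing statements is correct and complete, and the first two are handled soundly: $\delta_1=0$ on the row $q=0$ follows from Lemma \ref{holomclosed} exactly as you say, and your explicit zig-zag for (ii), writing $w=\mu x-\del y=d(x-y)$ and killing $w$ by Stokes' theorem together with positivity of $\int\alpha\wedge\bar\alpha$ on $(2,0)$-forms, is precisely the mechanism the paper invokes by citing Lemma 3.13 of \cite{CWDol} for $E_1^{2,0}=E_\infty^{2,0}$. Your top-row $\delta_1$ argument is also essentially the paper's: adjointness of the wedge pairing between the rows $q=0$ and $q=2$, with perfectness there supplied by the harmonic identification of Proposition 4.10 of \cite{CWDol}, is the pairing-language version of the paper's statement that $\bar\star$ intertwines $\del$ on $E_1^{*,0}$ with $\del^*$ on $E_1^{*,2}$.

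The genuine gap is (iii). To conclude that $\delta_2\colon E_2^{0,2}\to E_2^{2,1}$ vanishes from the vanishing of its adjoint $\delta_2\colon E_2^{0,1}\to E_2^{2,0}$, you need the descended pairing $E_2^{2,1}\times E_2^{0,1}\to\CC$ to be non-degenerate in the first slot, and your justification --- that ``the finite-dimensionality of $E_2^{p,1}$ transfers non-degeneracy to the middle row'' --- is not an argument: finite-dimensionality never produces non-degeneracy, and no Hodge theory is available on the middle row (in the non-integrable case $E_1^{p,1}$ is infinite-dimensional, so the extreme-row harmonic argument has no analogue there). Worse, middle-row duality $h^{0,1}=h^{2,1}$ is proved in the paper (Proposition \ref{SerreDuality}) as a \emph{consequence} of this very lemma via Poincar\'e duality, so assuming perfectness of the middle-row $E_2$-pairing here is circular. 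Fortunately you do not need it: a dimension sandwich at $(0,2)$ closes the argument. Since $F^2\Aa^2\wedge F^1\Aa^2\subseteq F^3\Aa^4=0$, Poincar\'e duality induces an injection $E_\infty^{2,0}=F^2H^2\hookrightarrow \left(H^2/F^1H^2\right)^*=\left(E_\infty^{0,2}\right)^*$, whence $\dim E_\infty^{2,0}\le\dim E_\infty^{0,2}$. Combining this with your bottom-row results and the harmonic isomorphism $\bar\star$ gives
\[
\dim E_\infty^{0,2}\le \dim E_1^{0,2}=\dim E_1^{2,0}=\dim E_\infty^{2,0}\le\dim E_\infty^{0,2},
\]
so all inequalities are equalities and every differential into or out of $(0,2)$ --- in particular $\delta_2\colon E_2^{0,2}\to E_2^{2,1}$ --- must vanish. (The same sandwich at $(1,2)$ and $(2,2)$ would even let you dispense with the pairing-on-pages formalism for the top row altogether.)
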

\begin{proof}
Lemma \ref{constant} implies  $E_1^{0,0}=E_\infty^{0,0}$ (c.f. Corollary 4.9 of \cite{CWDol})
 and Lemma 3.13 of \cite{CWDol} gives $E_1^{2,0}=E_\infty^{2,0}$ using the non-degenerate pairing in
the middle degree, as in the proof of Lemma \ref{vanishes} above. 
These two results imply  $E_1^{1,0}=E_\infty^{1,0}$ since any non-trivial 
differential $\delta_1$ with source or origin at $E_1^{1,0}$ would affect non-trivially 
one of the vector spaces $E_1^{0,0}$ or $E_1^{2,0}$. 
By Proposition 4.10 of \cite{CWDol}, we have 
\[E_1^{p,0} = \Ker \left( \Delta_\delb + \Delta_\mub \right) \cap \Aa^{p,0}\text{ and } 
E_1^{p,2} = \Ker \left( \Delta_\delb + \Delta_\mub \right) \cap \Aa^{p,2}.\] These are isomorphic under the Serre-Duality map
$\bar \star: E_1^{p,0} \cong E_1^{2-p,2}$. Moreover, this isomorphism intertwines the differentials $\del: E_1^{p,0} \to E_1^{p+1,0}$
and $\del^*: E_1^{p,2} \to E_1^{p-1,2}$. Therefore we have that $\del^* = 0$ on $E_1^{*,2}$.
This proves  $E_1$-degeneration
along the top row $q=2$ as well.
The second assertion now follows since the differential $\delta_r$ with $r \geq2 $ has bidegree $(r,r-1)$, so that $r-1>0$ and 
the differential must vanish by the first assertion.
\end{proof}

\section{Hodge-de Rham numbers}\label{SecHodge}

\subsection{Degeneration, Euler and Serre identities}
Define the \textit{Hodge-de Rham numbers} of a compact almost complex 4-manifold as
\[h^{p,q}:=\dim E_2^{p,q}.\]
Lemma \ref{degeneration} gives the degeneration identity
\[b^k=\sum_{p+q=k}h^{p,q}\]
of Equation \ref{IBetti},
and in particular one obtains the Euler characteristic in terms of Hodge-de Rham numbers
\[e=\sum (-1)^{p+q} h^{p,q},
\]
thus recovering Equation \ref{Ieuler}.
The following result restricts the possible Hodge-de Rham numbers in total degree 1.

\begin{lemm}\label{h01h10}
On a compact almost complex 4-manifold we have \[h^{1,0}\leq h^{0,1}\text{ and }b^1\leq 2h^{0,1}.\]
\end{lemm}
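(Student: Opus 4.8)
The plan is to prove the first inequality $h^{1,0}\leq h^{0,1}$ directly, since the second then follows for free: the degeneration identity $b^1=h^{1,0}+h^{0,1}$ (the total-degree-$1$ case of Equation \ref{IBetti}) immediately yields $b^1=h^{1,0}+h^{0,1}\leq 2h^{0,1}$. So all the work is in the comparison $h^{1,0}\leq h^{0,1}$.

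First I would pin down the two vector spaces involved. Since the bottom row degenerates at $E_1$ by Lemma \ref{degeneration}, we have $h^{1,0}=\dim E_1^{1,0}$, and a bidegree count collapses the defining quotient: in the numerator the auxiliary form $y$ lives in $\Aa^{2,-1}=0$, forcing $\delb x=0$, while the denominator is generated by forms $\mub a+\delb b$ with $a\in\Aa^{2,-2}=0$ and $b\in\Aa^{1,-1}=0$, hence trivial. Thus $E_1^{1,0}$ is honestly the space of $x\in\Aa^{1,0}$ with $\delb x=\mub x=0$. By Lemma \ref{holomclosed} each such $x$ is $d$-closed; writing $dx=\mub x+\delb x+\del x+\mu x$ and noting $\mu x\in\Aa^{3,-1}=0$ shows moreover that $\del x=0$, so all four components of $dx$ vanish.

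Next I would build a conjugate-linear map $\Phi\colon E_1^{1,0}\to E_2^{0,1}$ by $x\mapsto[\overline{x}]$. Conjugating the four identities $\mub x=\delb x=\del x=\mu x=0$, and using that $\mub,\delb$ are conjugate to $\mu,\del$, gives $\mu\overline{x}=\del\overline{x}=\delb\overline{x}=\mub\overline{x}=0$, so $\overline{x}\in\Aa^{0,1}$ is $d$-closed. In particular $\overline{x}$ satisfies the numerator conditions defining $E_2^{0,1}$ with auxiliary forms $y=z=0$, so $[\overline{x}]$ is a well-defined class; as $E_1^{1,0}$ has trivial denominator, $\Phi$ is unambiguous on forms.

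The decisive step is injectivity, and this is where Lemma \ref{vanishes} enters. Suppose $\Phi(x)=0$, so that $\overline{x}=\mub a+\delb b+\del c$ lies in the $E_2^{0,1}$ denominator. A bidegree count (using $\Aa^{1,-1}=\Aa^{-1,1}=0$) collapses this to $\overline{x}=\delb b$ with $b\in\Aa^{0,0}$, and conjugating gives $x=\del\overline{b}$. Thus $x\in\Aa^{1,0}$ is $\del$-exact while still $\delb$- and $\mub$-closed, so Lemma \ref{vanishes} forces $x=0$. Since an injective conjugate-linear map between complex vector spaces still forces the inequality of complex dimensions, we conclude $h^{1,0}=\dim E_1^{1,0}\leq\dim E_2^{0,1}=h^{0,1}$. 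I expect the only genuine subtlety to be the bookkeeping verifying that $\overline{x}$ really represents a class in $E_2^{0,1}$ and that vanishing of its class reduces exactly to $\del$-exactness of $x$; once that is established, Lemma \ref{vanishes} does the essential work.
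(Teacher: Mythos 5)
Your proof is correct and takes essentially the same route as the paper: both identify $h^{1,0}$ with the dimension of the space of forms $\alpha\in\Aa^{1,0}$ with $\delb\alpha=\mub\alpha=0$ (equivalently $\Aa^{1,0}\cap\Ker(d)$, via Lemma \ref{holomclosed}), define the conjugation map $\alpha\mapsto[\overline{\alpha}]$ into $E_2^{0,1}$, and prove injectivity by reducing vanishing of the class to $\del$-exactness of $\alpha$ — your citation of Lemma \ref{vanishes} is the same mechanism as the paper's inline appeal to Lemma \ref{constant}, since the $p=1$ case of Lemma \ref{vanishes} is proved exactly that way. The second inequality is deduced from $b^1=h^{1,0}+h^{0,1}$ in both arguments.
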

\begin{proof}
The numbers $h^{0,1}$ and $h^{1,0}$ are, by definition, the dimensions of the vector spaces 
\[E_2^{0,1}\cong{\{\alpha\in \Aa^{0,1}; \delb\alpha=\mub \beta, \del \alpha=\delb \beta\}\over\{\alpha=\delb f\}}, \quad{\text{and }}\quad E_2^{1,0}\cong \Aa^{1,0}\cap \Ker(d),\]
respectively.
Define a map $E_2^{1,0}\lra E_2^{0,1}$ by letting $\alpha\mapsto [\overline{\alpha}]$
This is well-defined, since $d\alpha=0$. We show that it is injective.
If $[\overline \alpha]=0$ then $\overline\alpha=\delb f$
and so $\alpha=\del f$ which gives $\delb \del f=0$. Then $f$ is constant by Lemma \ref{constant}
and so $\alpha=0$. This proves $h^{1,0}\leq h^{0,1}$. Since $b^1=h^{1,0}+h^{0,1}$, we get the inequality $b^1\leq 2h^{0,1}$.
\end{proof}

Serre duality is also satisfied for these numbers: 
\begin{prop} \label{SerreDuality}
On any compact almost complex 4-manifold we have $h^{p,q}=h^{2-p,2-q}$.
\end{prop}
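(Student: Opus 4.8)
The plan is to prove the equality $h^{p,q}=h^{2-p,2-q}$ separately for $q\in\{0,2\}$ and for $q=1$, since only the former bidegrees support Hodge theory. Throughout I would use Lemma \ref{degeneration}, which identifies $h^{p,q}=\dim E_1^{p,q}$ whenever $q\in\{0,2\}$ (there $E_1=E_2=E_\infty$), while $h^{p,1}=\dim E_2^{p,1}=\dim E_\infty^{p,1}$ for all $p$.

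For $q\in\{0,2\}$ I would invoke directly the conjugate Hodge star isomorphism $\bar\star\colon E_1^{p,0}\cong E_1^{2-p,2}$ already exhibited in the proof of Lemma \ref{degeneration}. Taking dimensions gives $h^{p,0}=h^{2-p,2}$ for every $p$, and replacing $p$ by $2-p$ yields the dual relation $h^{p,2}=h^{2-p,0}$. This settles all pairs $(p,q)\leftrightarrow(2-p,2-q)$ with $q\in\{0,2\}$; in particular it records the equality $h^{1,0}=h^{1,2}$, which is the key input below.

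It remains to treat $q=1$, that is, to prove $h^{p,1}=h^{2-p,1}$. The case $p=1$ is tautological, so the only content is $h^{0,1}=h^{2,1}$. Here a direct $\bar\star$ argument is awkward because the spaces in bidegree $(0,1)$ and $(2,1)$ carry no harmonic theory, so instead I would argue by a dimension count. Since an almost complex $4$-manifold is canonically oriented, Poincaré duality gives $b^1=b^3$. The degeneration identity $b^k=\sum_{p+q=k}h^{p,q}$ reads $b^1=h^{0,1}+h^{1,0}$ and $b^3=h^{2,1}+h^{1,2}$ in total degrees $1$ and $3$. Subtracting these and using $h^{1,0}=h^{1,2}$ from the previous step leaves $h^{0,1}=h^{2,1}$, as desired.

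The only genuine obstacle is this bidegree $(0,1)$/$(2,1)$ duality, precisely because no harmonic representatives are available there; the point of the argument is that Poincaré duality on total degrees $1$ and $3$, combined with the Hodge-theoretic duality already known in the columns $q\in\{0,2\}$, forces the remaining equality without ever analyzing $E_2^{0,1}$ directly. A more laborious alternative would be to check that $\bar\star$ carries the explicit cocycle conditions $\delb x=\mub y,\ \del x=\delb y$ defining $E_2^{0,1}$ to those defining $E_2^{2,1}$ and descends to the quotients; this requires the adjunction relations between $\bar\star$ and the operators $\mub,\delb,\del$, and is where such a direct proof would become delicate.
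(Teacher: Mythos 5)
Your proposal is correct and follows essentially the same route as the paper: the identities $h^{p,0}=h^{2-p,2}$ come from Serre duality on $E_1$ (the paper cites Corollary~4.11 of \cite{CWDol}, you invoke the $\bar\star$ isomorphism from the proof of Lemma~\ref{degeneration}, which is the same mechanism) together with $E_1=E_2$ in those bidegrees, and the remaining identity $h^{0,1}=h^{2,1}$ is deduced exactly as in the paper from Poincar\'e duality $b^1=b^3$, the degeneration identity, and $h^{1,0}=h^{1,2}$. Your closing remark about a direct $\bar\star$ argument on $E_2^{0,1}$ being delicate is apt, and it is precisely why the paper also avoids it.
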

\begin{proof}
The identities $h^{*,0}=h^{2-*,2}$ follow from Serre duality of Dolbeault 
cohomology proven in Corollary 4.11 of \cite{CWDol} together with the fact that $E_1=E_2$ in such bidegrees, by Lemma \ref{degeneration} above.
Finally, Poincare duality gives an equality 
\[b^1=h^{0,1}+h^{1,0}=h^{1,2}+h^{2,1}=b^3,\]
and since $h^{1,0}=h^{1,2}$, this implies the remaining identity $h^{0,1}=h^{2,1}$.
\end{proof}

\subsection{Signature formula}
In this section we generalize Equation \ref{Isignature} to the non-integrable setting. Though we only present here the dimension $4$ case, there is a result in all dimensions due to Hirzebruch \cite{H}, with a proof given recently by Albanese in an appendix to \cite{DG}. Let 
\[\widetilde\sigma:=\sum_{p,q=0}^2 (-1)^q h^{p,q}\]
and consider the \textit{holomorphic Euler characteristic}
\[\chi:= \sum_{q=0}^2 (-1)^q h^{0,q}.\]
We immediately obtain the following relation:
\begin{coro}\label{tildesigma}
On any compact almost complex 4-manifold we have $\widetilde\sigma=4\chi-e$.
\end{coro}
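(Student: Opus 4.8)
The plan is to expand all three quantities $\widetilde\sigma$, $\chi$, and $e$ as explicit linear combinations of the nine Hodge-de Rham numbers $h^{p,q}$, $0\leq p,q\leq 2$, and then reduce the claimed identity to the Serre duality of Proposition \ref{SerreDuality}. The identity $\widetilde\sigma=4\chi-e$ is equivalent to $\widetilde\sigma+e=4\chi$, and it is this additive form that I would verify.

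First I would separate the sum defining $e=\sum_{p,q}(-1)^{p+q}h^{p,q}$ according to the parity of $p$. Since $(-1)^{p+q}=(-1)^q$ for $p\in\{0,2\}$ while $(-1)^{p+q}=-(-1)^q$ for $p=1$, comparing with $\widetilde\sigma=\sum_{p,q}(-1)^q h^{p,q}$ shows that the two expressions agree except for the sign attached to the middle column $p=1$. Adding them, the $p=1$ contributions therefore cancel and I obtain
\[\widetilde\sigma+e=2\sum_{q=0}^2 (-1)^q\bigl(h^{0,q}+h^{2,q}\bigr).\]

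The key step is then to identify the two column sums on the right-hand side. The $p=0$ column is already $\sum_q (-1)^q h^{0,q}=\chi$ by definition. For the $p=2$ column I would invoke Serre duality $h^{2,q}=h^{0,2-q}$, so that, reindexing by $q'=2-q$ and using $(-1)^{2-q'}=(-1)^{q'}$,
\[\sum_{q=0}^2 (-1)^q h^{2,q}=\sum_{q=0}^2 (-1)^q h^{0,2-q}=\sum_{q'=0}^2 (-1)^{q'} h^{0,q'}=\chi.\]
Substituting both column sums into the displayed equation gives $\widetilde\sigma+e=4\chi$, which is exactly the claim. I do not anticipate any genuine obstacle here: the entire content of the corollary is the parity bookkeeping above together with the Serre symmetry already established in Proposition \ref{SerreDuality}, and no further analysis or geometry enters.
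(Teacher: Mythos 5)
Your proposal is correct and is essentially the paper's own argument: both proofs reduce the identity to parity bookkeeping in the sum $e=\sum(-1)^{p+q}h^{p,q}$ together with Serre duality (Proposition \ref{SerreDuality}) to identify the $p=2$ column sum with $\chi$. The only difference is cosmetic --- you verify the additive form $\widetilde\sigma+e=4\chi$ while the paper rewrites $e=-\widetilde\sigma+4\chi$ directly.
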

\begin{proof}
Using Serre duality we may write:
\begin{align*}
e & = \sum_{p,q=0}^2 (-1)^{p+q} h^{p,q} =\\
& =  - \left( \sum_{p,q=0}^2 (-1)^q h^{p,q} \right) + 2 \left( \sum_{q=0}^2 (-1)^q h^{0,q} +   \sum_{q=0}^2 (-1)^q h^{2,q} \right) =\\
& = - \left( \sum_{p,q=0}^2 (-1)^q h^{p,q} \right) + 4 \left( \sum_{q=0}^2 (-1)^q h^{0,q} \right)=-\widetilde\sigma+4\chi.\qedhere
\end{align*}
\end{proof}

For any compact almost complex 4-manifold, Hirzebruch's Signature Theorem gives the relation
\[\sigma = \frac 1 3 p_1 = \frac 1 3 ( c_1^2-2e),\]
where $p_1$ is the first Pontryagin class (see \cite{Hirbook}).
Also, recall that the \textit{top Todd class} of any compact 4-manifold is given by 
\[Td={1\over 12}(c_1^2+e).\]
Combining the above two identities we have 
\[e + \sigma = 4\cdot Td.\]
In particular we have $\sigma \equiv -e \, (\mathrm{mod}\, 4)$ for any almost complex $4$-manifold. 
This implies, for instance, that the connected sum of two almost complex manifolds is never almost 
complex, e.g. $S^4$ is not almost complex, since it is the unit for connected sums. 
Another consequence is the following:

\begin{prop}\label{sigmas}
 On any compact almost complex 4-manifold we have $\sigma-\widetilde\sigma=4(Td-\chi)$.
\end{prop}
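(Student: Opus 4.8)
The plan is to observe that this proposition is an immediate algebraic consequence of the results already assembled, requiring no new geometric input. Specifically, the two ingredients I would combine are Corollary \ref{tildesigma}, which expresses $\widetilde\sigma$ in terms of $\chi$ and $e$, and the topological identity $e+\sigma=4\cdot Td$ established in the discussion preceding the statement (itself derived from Hirzebruch's Signature Theorem $\sigma=\frac{1}{3}(c_1^2-2e)$ together with the top Todd class formula $Td=\frac{1}{12}(c_1^2+e)$).

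Concretely, I would start from Corollary \ref{tildesigma} in the form $\widetilde\sigma=4\chi-e$, which rearranges to $-\widetilde\sigma=e-4\chi$. Adding $\sigma$ to both sides gives $\sigma-\widetilde\sigma=\sigma+e-4\chi$. At this point I would substitute the relation $\sigma+e=4\cdot Td$, obtaining
\[
\sigma-\widetilde\sigma=4\cdot Td-4\chi=4(Td-\chi),
\]
which is precisely the claimed identity.

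I do not expect any genuine obstacle here, since the substantive content lies entirely in the earlier results: the Serre duality of Proposition \ref{SerreDuality}, which underpins Corollary \ref{tildesigma}, and the characteristic class computation that produces $e+\sigma=4\cdot Td$. The proof of this proposition is therefore a purely formal bookkeeping step that merges these two identities into a single relation comparing the topological signature $\sigma$ with its Hodge-theoretic analogue $\widetilde\sigma$. The only care needed is to track signs correctly when rearranging Corollary \ref{tildesigma} and to confirm that $4\chi$ cancels consistently on both sides.
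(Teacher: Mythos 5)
Your proof is correct and matches the paper's argument exactly: the paper likewise deduces the identity by combining Corollary \ref{tildesigma} with the relation $e+\sigma=4\cdot Td$, and your write-up simply makes the (trivial) algebra explicit. No gaps.
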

\begin{proof} 
It follows from the identity
$e + \sigma = 4\cdot Td$ together with  Corollary \ref{tildesigma}.
\end{proof}

The above result gives $\sigma\equiv \widetilde\sigma\, (\mathrm{mod}\, 4)$. As we will see in Section \ref{secinte}, for the case of complex surfaces, the index theorem implies that $\chi=Td$ so that $\sigma = \widetilde\sigma$, for a complex surface. 
This does not hold in general for almost complex manifolds as we will see in the examples of  Section \ref{SecExamples}.

\subsection{Noether's formula}

For any almost complex manifold there is an associated spin$^c$ structure and Dirac operator 
\[\dirac^c: \Gamma(\mathbb{S^+}) \to
\Gamma(\mathbb{S}^-),\] where 
$\mathbb{S}^+$ may be identified with $\Lambda^{0,\textrm{even}}$ and 
$\mathbb{S}^-$ may be identified with $\Lambda^{0,\textrm{odd}}$. 
According to Gauduchon \cite{Gau},
on a compact almost Hermitian $4$-manifold, and for any connection $\nabla$, the operator $\dirac^c$ is given by 
 \[
 \dirac^c \phi = \sqrt 2 \left( \delb + \delb^* \right) + \frac 1 4 \theta . \epsilon(\phi) +  \frac 1 2 i  a. \phi.
 \]
 Here $\theta$ is the Lee form characterized by $d \omega = \theta \wedge \omega$, 
 $ia = \nabla - \nabla^{\textrm{chern}}$ is the $1$-form that measures the difference from the Chern connection, 
 $\epsilon$ is the parity operator, and the dot indicates Clifford multiplication. 
 The operator $\delb^*$ is the formal adjoint to $\delb$ with respect to the chosen Hermitian metric.
 In particular, modulo zero order terms, $ \dirac^c \phi$ is equal to $ \sqrt 2\left( \delb + \delb^* \right)$. As a consequence, these operators have the same index.
 The Atiyah-Singer index theorem for $\dirac^c$ (see for instance \cite{Muk}) gives
\[
\ind( \dirac^c) = \ind ( \delb + \delb^* )=Td,
\]
where $Td$ is the top Todd class. This gives a natural generalization of Equation \ref{Inoether}:

\begin{prop}\label{NoetherNonInt}On any compact almost hermitian 4-manifold, the vector spaces
\[\Ker(\delb + \delb^*)\text{ and }\Coker(\delb + \delb^*)\] are finite dimensional and
 \[\dim \Ker(\delb + \delb^*) - \dim \Coker(\delb + \delb^*)={1\over 12}(c_1^2+e).\]
\end{prop}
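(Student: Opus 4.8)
The plan is to deduce everything from two standard facts about elliptic operators on a compact manifold, once $\sqrt2(\delb+\delb^*)$ has been recognized as a zeroth-order perturbation of the elliptic Dirac operator $\dirac^c$. First I would record that $\dirac^c\colon \Gamma(\mathbb{S}^+)\to\Gamma(\mathbb{S}^-)$ is a first-order elliptic operator: its principal symbol is Clifford multiplication, which is invertible off the zero section of $T^*M$. By Gauduchon's formula recalled above, the difference $\dirac^c-\sqrt2(\delb+\delb^*)$ is a bundle endomorphism (Clifford multiplication by a combination of the Lee form $\theta$ and the form $a$), hence an operator of order zero. Since the principal symbol is unaffected by zeroth-order terms and by the nonzero scalar $\sqrt2$, the operator $\delb+\delb^*\colon\Lambda^{0,\textrm{even}}\to\Lambda^{0,\textrm{odd}}$ has the same principal symbol as $\dirac^c$ up to that scalar, and is therefore elliptic. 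As $M$ is compact, elliptic theory then gives that $\Ker(\delb+\delb^*)$ and $\Coker(\delb+\delb^*)$ are finite dimensional, settling the first assertion.

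For the index identity I would use that the analytic index of an elliptic operator on a compact manifold depends only on the homotopy class of its principal symbol. The straight-line path $t\mapsto \dirac^c-t\bigl(\dirac^c-\sqrt2(\delb+\delb^*)\bigr)$ joins $\dirac^c$ to $\sqrt2(\delb+\delb^*)$ through operators all sharing the principal symbol of $\dirac^c$, so the index is constant along it; and multiplication by $\sqrt2\neq0$ changes neither kernel nor cokernel. Hence $\ind(\delb+\delb^*)=\ind(\dirac^c)$. Applying the Atiyah-Singer index theorem for $\dirac^c$ identifies this common index with the top Todd class, $\ind(\dirac^c)=Td=\frac{1}{12}(c_1^2+e)$, and unwinding $\ind(\delb+\delb^*)=\dim\Ker(\delb+\delb^*)-\dim\Coker(\delb+\delb^*)$ yields the stated formula.

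The genuine content lies entirely in the two cited inputs: Gauduchon's identification of $\dirac^c$ with $\sqrt2(\delb+\delb^*)$ modulo lower-order terms, and the Atiyah-Singer computation of the topological index as $Td$. The finite-dimensionality and the stability of the index under the zeroth-order perturbation are then formal consequences of the general theory of elliptic operators on compact manifolds, so I expect no real obstacle beyond correctly invoking these facts and being careful that the perturbation is genuinely of order zero so that the symbol, and hence the index, is preserved.
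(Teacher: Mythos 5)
Your argument is correct and follows essentially the same route as the paper: Gauduchon's formula identifies $\dirac^c$ with $\sqrt{2}(\delb+\delb^*)$ modulo zeroth-order terms, so the two operators share a principal symbol and hence an index, and the Atiyah--Singer theorem for $\dirac^c$ then gives $\ind(\delb+\delb^*)=Td=\frac{1}{12}(c_1^2+e)$. You merely spell out the standard elliptic-theory steps (ellipticity via the Clifford symbol, finite-dimensionality on a compact manifold, and homotopy invariance of the index under the zeroth-order perturbation) that the paper leaves implicit.
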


The right hand side is integral, and the left hand side is purely topological (since $c_1^2$ is topological, by Hirzebruch's Signature Theorem). In particular, as already noted by Hirzebruch \cite{Hirbook} and Van de Ven \cite{VdV}, we obtain:

\begin{coro}
On any compact almost complex 4-manifold $c_1^2+e\equiv 0 \,({mod}\, 12).$
\end{coro}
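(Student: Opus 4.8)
The plan is to derive the corollary directly from Proposition \ref{NoetherNonInt}. The proposition asserts that the integer-valued index $\dim \Ker(\delb + \delb^*) - \dim \Coker(\delb + \delb^*)$ equals $\tfrac{1}{12}(c_1^2 + e)$. Since the left-hand side is manifestly an integer (a difference of dimensions of finite-dimensional vector spaces), the right-hand side must be an integer as well.

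First I would invoke the proposition to get the equality
\[
\frac{1}{12}(c_1^2 + e) = \dim \Ker(\delb + \delb^*) - \dim \Coker(\delb + \delb^*) \in \ZZ.
\]
This requires fixing an arbitrary compatible Hermitian metric on the almost complex $4$-manifold, which exists since every almost complex manifold admits one; the integrality conclusion does not depend on this choice because the right-hand side $\tfrac{1}{12}(c_1^2+e)$ is independent of the metric. From $\tfrac{1}{12}(c_1^2+e) \in \ZZ$ we conclude immediately that $c_1^2 + e \equiv 0 \pmod{12}$.

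There is essentially no obstacle here: the corollary is a restatement of the integrality built into the index theorem. The only thing to be careful about is that $c_1^2$ and $e$ are well-defined topological quantities on the underlying manifold — $e$ is the Euler characteristic and $c_1^2$ is a topological invariant by Hirzebruch's Signature Theorem as noted in the text — so the combination $c_1^2 + e$ is a genuine integer and the statement $c_1^2 + e \equiv 0 \pmod{12}$ is meaningful. The proof is thus a single line unwinding the proposition, and I would keep it that short.
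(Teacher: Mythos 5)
Your proof is correct and follows exactly the paper's own reasoning: the corollary is deduced from Proposition \ref{NoetherNonInt} by observing that the index $\dim \Ker(\delb+\delb^*)-\dim\Coker(\delb+\delb^*)$ is an integer, so $\tfrac{1}{12}(c_1^2+e)\in\ZZ$. Your added remark that one must fix a compatible Hermitian metric (which always exists, and on which the right-hand side does not depend) is a sensible point the paper leaves implicit.
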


Conversely, in \cite{VdV} it is shown that for any pair of integers $(p,q)$ with $p+q\equiv 0 \,(\mathrm{mod}\, 12)$, there is a compact almost complex 4-manifold with $p=c_1^2$ and $q=e$.

There is a bit more one can say from the above proposition.  Note that if $\delb^2\neq 0$, the spaces $\Img(\delb)$ and $\Img(\delb^*)$ need not be orthogonal, so that  $\Ker(\delb + \delb^*)$  may not split as subspaces of bidegrees $(0,0)$ and $(0,2)$.
Nevertheless, we have the following.
Recall that 
\[\Hh^{p,q}_\delb:=\Ker(\Delta_\delb) \cap \Aa^{p,q} \quad \text{and} \quad h^{p,q}_\delb:=\dim \Hh_\delb^{p,q}.\]
The spaces $\Hh_\delb^{p,q}$ are always finite-dimensional in the compact case and we have Serre-duality isomorphisms, giving identities $h^{p,q}_\delb=h^{m-p,m-q}_\delb$ for any almost complex manifold of dimension $2m$.
We have:

\begin{coro}
For any compact almost complex $4$-manifold
\[
\dim \Coker(\delb + \delb^*) = h_\delb^{0,1}\quad\text{ and }\quad
h^{0,1}_\delb \geq 1+h^{2,0}_\delb - {1\over 12}(c_1^2+e).
\]
\end{coro}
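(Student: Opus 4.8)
The plan is to extract both statements from Proposition \ref{NoetherNonInt} by analyzing the operator $D:=\delb+\delb^*$ acting from $\Gamma(\mathbb{S}^+)=\Aa^{0,0}\oplus\Aa^{0,2}$ to $\Gamma(\mathbb{S}^-)=\Aa^{0,1}$. Since $D$ maps the even part to the odd part, its formal adjoint $D^*=\delb+\delb^*$ maps $\Aa^{0,1}$ back to $\Aa^{0,0}\oplus\Aa^{0,2}$. First I would identify the cokernel. Because $D^*$ restricted to $\Aa^{0,1}$ has image in the even forms and its kernel is $\Ker(\delb^*)\cap\Ker(\delb)|_{0,1}=\Hh_\delb^{0,1}$, elliptic theory gives $\Coker(D)\cong\Ker(D^*)|_{0,1}=\Hh_\delb^{0,1}$, so $\dim\Coker(\delb+\delb^*)=h_\delb^{0,1}$. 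This is the cleanest step and uses only the standard identification of the cokernel of an elliptic operator with the kernel of its adjoint together with the definition of $\Hh_\delb^{0,1}$.

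Next I would estimate the kernel from below. We have $\Ker(D)=\Ker(\delb+\delb^*)\cap(\Aa^{0,0}\oplus\Aa^{0,2})$. In general this kernel need not split according to bidegree, as the remark preceding the corollary warns, so I cannot simply write it as $\Hh_\delb^{0,0}\oplus\Hh_\delb^{0,2}$. Instead I would produce an explicit lower bound on $\dim\Ker(D)$. The constants lie in $\Ker(D)$ by Lemma \ref{constant}, contributing $h^{0,0}_\delb=1$, and any genuinely $\delb$- and $\delb^*$-closed $(0,2)$-form also lies in $\Ker(D)$, contributing $h^{2,0}_\delb=h^{0,2}_\delb$ via Serre duality. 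The subtlety is showing these contributions are linearly independent inside $\Ker(D)$ even without the bidegree splitting; I would argue that the constant function and the harmonic $(0,2)$-forms span a subspace of dimension $1+h^{2,0}_\delb$ on which $D$ vanishes, yielding $\dim\Ker(\delb+\delb^*)\geq 1+h^{2,0}_\delb$.

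Finally I would combine the two pieces with the index formula. Proposition \ref{NoetherNonInt} states
\[
\dim\Ker(\delb+\delb^*)-\dim\Coker(\delb+\delb^*)=\tfrac{1}{12}(c_1^2+e).
\]
Substituting the identification $\dim\Coker(\delb+\delb^*)=h_\delb^{0,1}$ and the lower bound $\dim\Ker(\delb+\delb^*)\geq 1+h^{2,0}_\delb$ gives
\[
1+h^{2,0}_\delb-h^{0,1}_\delb\leq\tfrac{1}{12}(c_1^2+e),
\]
which rearranges to the desired inequality $h^{0,1}_\delb\geq 1+h^{2,0}_\delb-\tfrac{1}{12}(c_1^2+e)$.

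I expect the main obstacle to be the kernel lower bound rather than the cokernel identification or the final arithmetic. The difficulty is precisely the failure of the bidegree splitting when $\delb^2\neq 0$: one must verify that the constants and the $\delb^*$-harmonic $(0,2)$-forms really do sit inside $\Ker(\delb+\delb^*)$ and remain linearly independent there, since a priori an element of $\Ker(\delb+\delb^*)$ could have both a $(0,0)$- and a $(0,2)$-component with cancelling images. Checking that the pure-bidegree elements survive as an independent subspace of the unsplit kernel is the step requiring genuine care.
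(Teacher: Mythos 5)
Your proposal is correct and takes essentially the same route as the paper: the paper identifies $\Coker(\delb+\delb^*)\cong\Hh_\delb^{0,1}$ via the explicit map $x\mapsto[x]$ (surjective by the Hodge decomposition $x=\Hh_\delb(x)+\delb\delb^*Gx+\delb^*\delb Gx$, injective by orthogonality of $\Hh_\delb^{0,1}$ to $\Img(\delb)$ and $\Img(\delb^*)$), which is the same standard identification you get from $\Coker(D)\cong\Ker(D^*)$ with $\Ker(D^*)|_{0,1}=\Hh^{0,1}_\delb$ forced by bidegree, and it likewise bounds the kernel below by the pure-bidegree harmonics to obtain $h^{0,0}_\delb-h^{0,1}_\delb+h^{0,2}_\delb\leq \tfrac{1}{12}(c_1^2+e)$ before invoking Serre duality $h^{0,2}_\delb=h^{2,0}_\delb$. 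The one place you overestimate the difficulty is the linear-independence worry: constants and $\Delta_\delb$-harmonic $(0,2)$-forms sit in the complementary summands $\Aa^{0,0}$ and $\Aa^{0,2}$ of the domain, so their span in $\Ker(\delb+\delb^*)$ automatically has dimension $1+h^{2,0}_\delb$ (each is killed by $D$ since $\delb$ of a constant vanishes and $\delb\alpha\in\Aa^{0,3}=0$, $\delb^*\alpha=0$ for harmonic $\alpha$), and the failure of bidegree splitting could only obstruct the reverse inequality, which the corollary never needs.
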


We note in the inequality, the left hand side is metric dependent \cite{HZ}, who have shown $h^{0,1}_\delb$ can be arbitrarily large. But it is bounded below by the right hand side, which depends only on the topology and the almost complex structure, since $h^{2,0}_\delb$ is metric independent. 

\begin{proof}
Consider the map 
$\Hh_\delb^{0,1}\to \Coker(\delb+\delb^*)$
given by $x\mapsto [x]$. It is surjective since given $[x]$ we may choose a representative $x$ and take its decomposition 
$x=\Hh_\delb(x)+\delb\delb^*Gx+\delb^*\delb Gx$, where $G$ is Green's operator for $\Delta_\delb$ and $\Hh_\delb(x)$ denotes the projection to $\delb$-harmonic forms. 
Also, this map is injective since $\Hh_\delb^{0,1}$ is orthogonal to both images $\Img(\delb)$ and $\Img(\delb^*)$. 

Proposition \ref{NoetherNonInt} then yields:
\[
h^{0,0}_\delb - h^{0,1}_\delb+h^{0,2}_\delb \leq  
\dim \Ker(\delb + \delb^*) - \dim \Coker(\delb + \delb^*)={1\over 12}(c_1^2+e)
\]
and we use the fact that $h^{0,2}_\delb = h^{2,0}_\delb$ by Serre Duality.
\end{proof}

\section{Integrability}\label{secinte}
In this section we show how, in the integrable case, the results obtained in the previous sections collapse to give Equations
\ref{Isignature} and \ref{Inoether}, recovering the well-known results on the Hodge numbers for compact complex surfaces.
We also prove particular results that are valid only in the non-integrable case, and discuss obstructions to integrability related to Hodge theory, for almost complex 4-manifolds.

\subsection{Complex surfaces} Let us now restrict to the integrable setting.
We first recall an inequality for the geometric genus $p_g:=h^{0,2}$ (see Lemma IV.2.6 of \cite{BaHu}).
\begin{lemm}\label{geomgenus}
On any compact complex surface we have $b^+\geq 2p_g$.
\end{lemm}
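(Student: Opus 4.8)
The plan is to realize the geometric genus by holomorphic two-forms and to exhibit a $2p_g$-dimensional subspace of $H^2(M;\RR)$ on which the intersection form is positive definite. First I would use Serre duality (Proposition \ref{SerreDuality}) to write $p_g=h^{0,2}=h^{2,0}$, and identify $E_\infty^{2,0}=E_1^{2,0}$ (Lemma \ref{degeneration}) with the space of holomorphic two-forms, which in the integrable case are precisely the $\delb$-closed forms in $\Aa^{2,0}$. By Lemma \ref{holomclosed} every such $\alpha$ is $d$-closed, hence represents a class in $H^2_{\dR}\otimes\CC$, and its conjugate $\overline{\alpha}$ is a closed $(0,2)$-form. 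Because the Fr\"olicher spectral sequence degenerates, $E_\infty^{2,0}$ is the top piece of the Hodge filtration on $H^2_{\dR}\otimes\CC$, and $E_\infty^{0,2}=\overline{E_\infty^{2,0}}$ is a distinct graded piece; the two meet trivially, so the conjugation-invariant real subspace
\[ V:=\bigl(E_\infty^{2,0}\oplus E_\infty^{0,2}\bigr)\cap H^2(M;\RR) \]
has real dimension $2p_g$, its general element being $\alpha+\overline{\alpha}$ for a holomorphic two-form $\alpha$.

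Next I would compute the intersection form on $V$. For a holomorphic two-form $\alpha$ on a complex surface the products $\alpha\wedge\alpha$ and $\overline{\alpha}\wedge\overline{\alpha}$ vanish for bidegree reasons (they would be of type $(4,0)$ and $(0,4)$), and since two-forms commute the cross terms coincide, so
\[ \int_M (\alpha+\overline{\alpha})\wedge(\alpha+\overline{\alpha}) = 2\int_M \alpha\wedge\overline{\alpha}. \]
The right-hand side is strictly positive for $\alpha\neq 0$ because $(\alpha,\beta)\mapsto\int_M\alpha\wedge\overline{\beta}$ is exactly the positive-definite pairing on $(2,0)$-forms already invoked in the proof of Lemma \ref{holomclosed}. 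Thus every nonzero element of $V$ has strictly positive self-intersection, and a symmetric form that is strictly positive on each nonzero vector of a subspace is positive definite there. Since $b^+$ is by definition the maximal dimension of a positive-definite subspace of $H^2(M;\RR)$, I conclude $b^+\geq \dim_\RR V=2p_g$.

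The step I expect to require the most care is the bookkeeping in the first paragraph rather than the positivity computation, which is elementary. Concretely, one must ensure that the $2p_g$ real classes are genuinely independent in de Rham cohomology: this rests on the $E_1$-degeneration of the Fr\"olicher spectral sequence for surfaces, which splits $H^2_{\dR}\otimes\CC$ as $E_\infty^{2,0}\oplus E_\infty^{1,1}\oplus E_\infty^{0,2}$ with $E_\infty^{0,2}=\overline{E_\infty^{2,0}}$ and with distinct graded pieces intersecting trivially, so that the holomorphic two-forms and their conjugates inject into $H^2$ and span a subspace of the expected real dimension. Note that no K\"ahler hypothesis is needed: closedness of holomorphic two-forms (Lemma \ref{holomclosed}) and the degeneration recorded in Lemma \ref{degeneration} suffice even in the non-K\"ahler integrable case.
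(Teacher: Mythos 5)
Your proof is, at its core, the same as the paper's: both arguments send a holomorphic two-form $\alpha$ to the real form $\alpha+\overline{\alpha}$ and use the positivity of $\int\alpha\wedge\overline{\alpha}$ on $(2,0)$-forms, together with the bidegree vanishing of $\alpha\wedge\alpha$ and $\overline{\alpha}\wedge\overline{\alpha}$, to produce a $2p_g$-dimensional subspace on which the intersection form is positive definite. The one place your write-up goes astray is exactly the ``bookkeeping'' you flagged. The piece $E_\infty^{0,2}$ is the quotient $H^2/F^1H^2$, not a subspace of $H^2\otimes\CC$, so $V:=\bigl(E_\infty^{2,0}\oplus E_\infty^{0,2}\bigr)\cap H^2(M;\RR)$ is not literally defined; and, more substantively, $E_1$-degeneration alone does \emph{not} imply $F^2H^2\cap\overline{F^2H^2}=0$. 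That statement is the purity of the Hodge filtration, which the paper treats as a separate result (Proposition \ref{Hodgedec}, quoted from \cite{BaHu} in the integrable case) and which is not a formal consequence of degeneration: $\overline{F^2H^2}$ is a piece of the \emph{conjugate} filtration, so ``distinct graded pieces meet trivially'' does not apply to it.

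Fortunately the gap is self-repairing by your own positivity computation, so no appeal to $E_\infty$ pieces or purity is needed. The real-linear map $\alpha\mapsto[\alpha+\overline{\alpha}]\in H^2(M;\RR)$, defined on the space of holomorphic two-forms (closed by Lemma \ref{holomclosed}, of real dimension $2p_g$ using $h^{2,0}=h^{0,2}$), is injective: if $\alpha+\overline{\alpha}=d\beta$, then Stokes gives $0=\int(\alpha+\overline{\alpha})\wedge(\alpha+\overline{\alpha})=2\int\alpha\wedge\overline{\alpha}$, forcing $\alpha=0$. Its image is then a $2p_g$-dimensional subspace of $H^2(M;\RR)$ on which the intersection form is positive definite, giving $b^+\geq 2p_g$. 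The paper sidesteps the same issue at the level of forms instead: every $d$-closed $(2,0)$-form is $d$-harmonic (it is self-dual), so the image of $g(\alpha)=\alpha+\overline{\alpha}$ lies in the real $d$-harmonic two-forms, which inject into cohomology, and the pairing is evaluated on those representatives. Either repair is fine; I would replace your first paragraph's spectral-sequence bookkeeping with the injectivity argument above.
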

\begin{proof}
Consider the map
$g: E_1^{2,0}\cong \Ker (\Delta_\delb) \cap \Aa^{2,0} \to \Aa^{2,0} \oplus \Aa^{0,2}$ defined by $g(\alpha):=\alpha + \bar \alpha$.
This map is an injection, since $ \Aa^{2,0} \cap \Aa^{0,2} =0$. Also, it has image in the real $d$-harmonic forms, since
every closed $(2,0)$-form is harmonic.
The pairing on complex valued $2$-forms given by 
\[
(\alpha, \beta) = \int \alpha \wedge \bar \beta
\]
 is positive definite on $\Aa^{2,0} \oplus \Aa^{0,2}$, and restricts to the intersection pairing on real forms. So, the pairing is still positive definite when restricted to the space of real $d$-harmonic forms in $ \Aa^{2,0} \oplus \Aa^{0,2}$, and we have
 \[
 b^+ \geq \dim_{\R} \left( \Img \, g \right) = \dim_\R \left( E_1^{2,0} \right) = 2 h^{2,0}=2p_g\qedhere
\]
\end{proof}

We may now prove the main results on the Hodge numbers for compact complex surfaces  (see for instance Theorem IV.2.7 in \cite{BaHu}).

\begin{theo}\label{complexfinale}For any compact complex surface, the Fr\"{o}licher spectral sequence degenerates at $E_1$.
The identities $\sigma=4\chi-e$ and $12\chi=c_1^2+e$ are satisfied and:
\begin{enumerate}
 \item [(i)] If $b^1$ is even then $h^{0,1}=h^{1,0}$ and $h^{1,1}=b^-+1$.
 \item [(ii)] If $b^1$ is odd then $h^{0,1}=h^{1,0}+1$ and $h^{1,1}=b^-$.
\end{enumerate}
\end{theo}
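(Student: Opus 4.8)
The plan is to derive the entire theorem from a single inequality. Throughout I work in the integrable case, where $\mu\equiv\mub\equiv 0$, so that the first page $E_1^{p,q}$ is ordinary Dolbeault cohomology $H^{p,q}_\delb$ and $\delta_1=[\del]$. By Lemma~\ref{degeneration} we have $E_1=E_\infty$ along the rows $q=0$ and $q=2$, so the only differentials that can be nonzero are the two middle-row maps $\delta_1\colon E_1^{0,1}\to E_1^{1,1}$ and $\delta_1\colon E_1^{1,1}\to E_1^{2,1}$, which are mutually adjoint under Serre duality and hence of equal rank $r$. Since $E_1^{-1,1}=0$, we get $\dim E_2^{0,1}=\dim E_1^{0,1}-r$, and degeneration at $E_1$ is equivalent to $r=0$. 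The goal is therefore to prove $r=0$ together with $h^{0,1}-h^{1,0}\le 1$.

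First I would compute the holomorphic Euler characteristic in two ways. Because $\delb^2=0$ the images $\Img(\delb)$ and $\Img(\delb^*)$ are orthogonal, so $\Ker(\delb+\delb^*)$ splits by bidegree as $\Hh^{0,0}_\delb\oplus\Hh^{0,2}_\delb$ with cokernel $\Hh^{0,1}_\delb$; thus Proposition~\ref{NoetherNonInt} reads $Td=\dim E_1^{0,0}-\dim E_1^{0,1}+\dim E_1^{0,2}$. Comparing this with $\chi=\dim E_2^{0,0}-\dim E_2^{0,1}+\dim E_2^{0,2}$ and using Lemma~\ref{degeneration} at the bidegrees $(0,0)$ and $(0,2)$ gives $\chi-Td=r$. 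Feeding this into Proposition~\ref{sigmas} produces the first expression
\[ \widetilde\sigma-\sigma=4(\chi-Td)=4r. \]

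Next I would compute $\widetilde\sigma-\sigma$ a second time, geometrically. Serre duality (Proposition~\ref{SerreDuality}) collapses the alternating sum to $\widetilde\sigma=2+2h^{1,0}-2h^{0,1}+2p_g-h^{1,1}$, while the intersection form on $H^2$ together with $b^2=2p_g+h^{1,1}$ gives $\sigma=2b^+-2p_g-h^{1,1}$. Subtracting and equating with $\widetilde\sigma-\sigma=4r$ yields
\[ b^+=1+(h^{1,0}-h^{0,1})+2p_g-2r. \]
At this point the two positivity inputs finish the argument: Lemma~\ref{geomgenus} gives $b^+\ge 2p_g$ and Lemma~\ref{h01h10} gives $h^{0,1}\ge h^{1,0}$, so the displayed identity forces
\[ (h^{0,1}-h^{1,0})+2r\le 1 \]
with both summands nonnegative integers. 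I expect this to be the crux of the proof: a single inequality simultaneously collapses the spectral sequence and bounds the parity defect, since $2r\le 1$ already forces $r=0$, and then $h^{0,1}-h^{1,0}\in\{0,1\}$.

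It then remains to read off the statements. Vanishing of $r$ kills $\delta_1$ out of $E_1^{0,1}$, and by the adjointness noted above the other middle-row differential vanishes as well, so the Fr\"olicher spectral sequence degenerates at $E_1$. From $r=0$ we recover $\chi=Td$, hence Noether's formula $12\chi=c_1^2+e$ via Proposition~\ref{NoetherNonInt}, and the signature identity $\sigma=\widetilde\sigma=4\chi-e$ via Corollary~\ref{tildesigma}. For the dichotomy, the degeneration identity $b^1=h^{1,0}+h^{0,1}$ forces $h^{0,1}-h^{1,0}$ to have the parity of $b^1$, so the defect equals $0$ when $b^1$ is even and $1$ when $b^1$ is odd, giving the asserted values of $h^{0,1}$ and $h^{1,0}$. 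Finally, setting $r=0$ in the displayed formula for $b^+$ gives $b^+-2p_g=1-(h^{0,1}-h^{1,0})$, and since $h^{1,1}=b^2-2p_g=b^-+(b^+-2p_g)$ this delivers $h^{1,1}=b^-+1$ in the even case and $h^{1,1}=b^-$ in the odd case.
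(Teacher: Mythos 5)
Your argument is correct and is essentially the paper's own proof in different bookkeeping: after unwinding $h^{0,1}_\delb=h^{0,1}+r$ and $b^1=h^{0,1}+h^{1,0}$, your key identity $(b^+-2p_g)+(h^{0,1}-h^{1,0})+2r=1$ is exactly the paper's $(b^+-2h^{0,2})+(2h^{0,1}_\delb-b^1)=1$, obtained from the same inputs (Proposition \ref{NoetherNonInt} with the $\delb^2=0$ splitting of $\Ker(\delb+\delb^*)$, Hirzebruch's signature theorem entering via Corollary \ref{tildesigma} and Proposition \ref{sigmas}, and the positivity Lemmas \ref{geomgenus} and \ref{h01h10}) and read off in the same way, with your rank-$r$ and Serre-adjointness device for the middle-row differentials playing the role of the paper's direct comparison of $h^{0,1}$ with $h^{0,1}_\delb$ and its appeal to Proposition \ref{SerreDuality} at bidegree $(2,1)$. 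One remark: like the paper's own proof, you establish $\sigma=4\chi-e$; the identity ``$\sigma=2\chi-e$'' printed in the theorem statement is a typo.
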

\begin{proof} 
Hodge theory gives isomorphisms 
$\Hh_\delb^{p,q}\cong E_1^{p,q}$
so in particular, $h^{p,q}\leq h^{p,q}_\delb$.
Note as well that by Lemma \ref{degeneration} we have
$h_\delb^{p,0}=h^{p,0}$ and $h_\delb^{p,2}=h^{p,2}$.
A computation, using integrability $\delb^2=0$, shows that
\[\dim \Ker(\delb + \delb^*)=h^{0,0}+h^{0,2}=1+h^{0,2}\text{ and }
\dim \Coker(\delb + \delb^*)=h^{0,1}_\delb.\] Then,
Proposition \ref{NoetherNonInt} gives 
\[1-h^{0,1}_\delb+h^{0,2}=\frac{1}{12} \left(c_1^2 + e \right).\]
Combined with Hirzebruch's signature formula
$\sigma = \frac 1 3  \left( c_1^2 - 2 e\right)$
we get
\[(b^+-2h^{0,2})+(2h^{0,1}_\delb-b^1)=1.\]
Both terms within brackets are non-negative. Indeed,
by Lemma \ref{geomgenus} we have $b^+\geq 2h^{0,2}$ and
by Lemma \ref{h01h10} we have $b^1\leq 2h^{0,1}\leq 2h_\delb^{0,1}$.
We are left
with only two possibilities:
\begin{enumerate}
 \item [(i)]$b^+=2h^{0,2}+1$ and $b^1=2h^{0,1}_\delb$, or
 \item [(ii)] $b^+=2h^{0,2}$ and $b^1=2h^{0,1}_\delb-1$.
\end{enumerate}
Since $b^1=h^{0,1}+h^{1,0}\leq 2h^{0,1}\leq 2h_\delb^{0,1}$, both cases imply
$h^{0,1}=h^{0,1}_\delb$. This proves that $\chi=Td$ and so Corollary \ref{tildesigma} gives the signature formula
$\sigma=4\chi-e$. Combining it with Hirzebruch's Signature Theorem $\sigma={1\over 3}(c_1^2-2e)$, we get Noether's formula
$12\chi=c_1^2+e$.
We now prove degeneration at $E_1$.
By Serre duality we get $h^{2,1}=h^{2,1}_\delb$.
Therefore we have $E_1^{0,1}=E_2^{0,1}$ and $E_1^{2,1}=E_2^{2,1}$, which implies that both differentials
$E_1^{0,1}\stackrel{\delta_1}{\lra}E_1^{1,1}\stackrel{\delta_1}{\lra}E_1^{2,1}$ in the spectral sequence
are trivial and so $E_2^{1,1}=E_1^{1,1}$. This gives 
 $h^{1,1}=h^{1,1}_\delb$. Lastly, Lemma \ref{degeneration} gives $h^{p,q}=h^{p,q}_\delb$ when $q\in \{0,2\}$.
Therefore we have $E_1=E_\infty$ and the two possibilities above correspond to even and odd $b^1$ respectively.
\end{proof}

The two cases above correspond to K\"ahler and non-K\"ahler surfaces respectively. Note that the above result, 
together with Equation \ref{IBetti}
as well as the Serre duality identities \ref{ISerre} allows to determine all Hodge numbers of a complex surface from
the Betti numbers together with the signature of the intersection pairing.

\subsection{Non-integrable structures} 
In the non-integrable case, and for the
middle total degree, Hodge-de Rham numbers are completely determined as follows:
\begin{lemm}\label{noninth20}
On a compact non-integrable almost complex $4$-manifold we have $h^{1,1}=b^2$ and  $h^{0,2}=h^{2,0}=0$. 
\end{lemm}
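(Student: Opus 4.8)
The plan is to prove $h^{2,0}=h^{0,2}=0$ first; the equality $h^{1,1}=b^2$ then drops out of the degeneration identity $b^2=h^{2,0}+h^{1,1}+h^{0,2}$. By Serre duality (Proposition \ref{SerreDuality}) we have $h^{2,0}=h^{0,2}$, so it is enough to kill $h^{2,0}$. By Lemma \ref{degeneration} we have $E_1^{2,0}=E_2^{2,0}=E_\infty^{2,0}$, so that $h^{2,0}=\dim E_1^{2,0}$, and by Proposition 4.10 of \cite{CWDol} this equals $\dim\big(\Ker(\Delta_\delb+\Delta_\mub)\cap\Aa^{2,0}\big)$; explicitly, it is the space of $x\in\Aa^{2,0}$ with $\delb x=0$ and $\mub x=0$ (the adjoint conditions being vacuous since $\Aa^{2,-1}=\Aa^{3,-2}=0$).

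The local heart of the argument is that $\mub$ acts \emph{tensorially} on $(2,0)$-forms. Since $\mub f=0$ for every function $f$ (for bidegree reasons) and $\mub$ is a graded derivation, we have $\mub(fx)=f\,\mub x$, so $\mub\colon\Aa^{2,0}\to\Aa^{1,2}$ is $C^\infty(M)$-linear, i.e. a bundle map of the line bundle $\Lambda^{2,0}$. In a local coframe $\theta^1,\theta^2$ of $(1,0)$-forms, write $\mub\theta^i=N^i\,\bar\theta^1\wedge\bar\theta^2$; a direct computation gives $\mub(\theta^1\wedge\theta^2)=N^1\,\theta^2\wedge\bar\theta^1\wedge\bar\theta^2-N^2\,\theta^1\wedge\bar\theta^1\wedge\bar\theta^2$, which vanishes precisely when $N^1=N^2=0$, that is, exactly where the structure is integrable. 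Hence the bundle map $\mub|_{\Lambda^{2,0}}$ is injective (nonzero, as $\Lambda^{2,0}$ is a line bundle) precisely on the open set $U:=\{\mub\neq 0\}$, the non-integrability locus, which is nonempty by hypothesis. Consequently $\mub x=0$ forces $x\equiv 0$ on $U$.

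It remains to propagate this vanishing to all of $M$. Here I would use that $\mub$, being tensorial, is a zeroth-order operator, so $\Delta_\delb+\Delta_\mub$ has the same principal symbol as the elliptic operator $\Delta_\delb$. Since $x$ lies in the kernel of this second-order elliptic operator with scalar principal part and vanishes on the nonempty open set $U$, Aronszajn's strong unique continuation theorem yields $x\equiv 0$ on the connected manifold $M$. Therefore $h^{2,0}=0$, hence $h^{0,2}=0$ by Serre duality, and the degeneration identity $b^2=h^{2,0}+h^{1,1}+h^{0,2}$ gives $h^{1,1}=b^2$.

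The main obstacle is precisely this last step: the pointwise tensorial computation only kills $x$ where $\mub\neq 0$, while a priori $\mub$ may vanish on a large closed set (where the structure is locally integrable and $x$ behaves like a genuine holomorphic $2$-form, which need not vanish). The analytic input that bridges the gap is the ellipticity of $\Delta_\delb+\Delta_\mub$ together with unique continuation; to apply it one must verify that $\mub$ is genuinely zeroth order, so that ellipticity is inherited from $\Delta_\delb$, and that $M$ is connected, so that vanishing on an open set forces global vanishing.
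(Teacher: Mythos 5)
Your proof is correct and follows essentially the same route as the paper's: reduce to $h^{2,0}=0$ via Serre duality and the degeneration identity, use the tensoriality of $\mub$ to kill the form on the open non-integrability locus, and conclude by elliptic unique continuation. The only (harmless) variations are that you apply Aronszajn to the elliptic operator $\Delta_\delb+\Delta_\mub$, whereas the paper observes that a $d$-closed $(2,0)$-form is harmonic and invokes unique continuation for harmonic forms, and that you spell out the local coframe computation the paper cites from the literature.
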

\begin{proof}
By Proposition \ref{SerreDuality} we have $h^{0,2}=h^{2,0}$. Since $b^2=h^{2,0}+h^{1,1}+h^{0,2}$  it suffices to show $h^{2,0}=0$.
The argument is analogous to that of Lemma 5.6 in \cite{AK}, see also Lemma 2.12 in \cite{DrLiZh}.
By Proposition 4.10 of \cite{CWDol} every class $[\alpha]\in E_1^{2,0}$ is represented by a $d$-closed form $\alpha$, so in particular 
$\mub \alpha = 0$. Since the almost complex structure is non-integrable,
there is an open set $U$ on which $\mub: T^{2,0} \to T^{1,2}$ is pointwise nonzero and therefore $\alpha$ is zero on $U$. 
On the other hand, a $d$-closed $(2,0)$-form $\alpha$ is always harmonic.
Hence $\alpha$ is zero on an open set it is zero everywhere
by the continuation theorem of \cite{Aro} (see also Corollary 1 of  \cite{bar}).
\end{proof}

Together with Serre duality, the above result implies that, in the non-integrable case, all numbers $h^{*,*}$ may be deduced from the Betti numbers of the manifold together with the \textit{irregularity} $\mathfrak{q}:=h^{0,1}$, which depends on the almost complex structure.
Note that by Lemma \ref{h01h10}, $\mathfrak{q}$ is bounded by
$b^1\leq 2\mathfrak{q}\leq 2b^1$.
This immediately gives:
\begin{coro}
If $b^1\leq 1$ then the irregularity $\mathfrak{q}=b^1$, as well as all Hodge numbers, are  topological invariants. 
\end{coro}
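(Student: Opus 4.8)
The plan is to first pin down the irregularity $\mathfrak{q}=h^{0,1}$ exactly as an integer, and then feed this value into the structural description of the Hodge-de Rham numbers already established in this section. The whole statement splits as two assertions — that $\mathfrak{q}=b^1$, and that every $h^{p,q}$ is then topological — and the first is really the only place where new work happens.

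First I would invoke Lemma \ref{h01h10}, which gives the sandwich $b^1\leq 2\mathfrak{q}\leq 2b^1$. The key observation is that $\mathfrak{q}$ is by definition a non-negative integer, so once $b^1$ is small this double inequality determines $\mathfrak{q}$ uniquely. Concretely, if $b^1=0$ then $0\leq 2\mathfrak{q}\leq 0$ forces $\mathfrak{q}=0$, and if $b^1=1$ then $1\leq 2\mathfrak{q}\leq 2$ forces $\mathfrak{q}=1$, the only integer in the interval $[\tfrac12,1]$. In both cases $\mathfrak{q}=b^1$, which is manifestly a topological invariant.

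Next I would propagate this to all the other numbers. From the degeneration identity $b^1=h^{1,0}+h^{0,1}$ together with $\mathfrak{q}=b^1$ we get $h^{1,0}=b^1-\mathfrak{q}=0$, and Serre duality (Proposition \ref{SerreDuality}) then yields $h^{2,1}=h^{0,1}=b^1$, $h^{1,2}=h^{1,0}=0$, and $h^{0,0}=h^{2,2}=1$. The only numbers not yet fixed are $h^{2,0}=h^{0,2}$ and $h^{1,1}$, which are linked solely by $b^2=2h^{2,0}+h^{1,1}$. Here I would appeal to the preceding results: in the non-integrable case Lemma \ref{noninth20} gives $h^{2,0}=h^{0,2}=0$ and $h^{1,1}=b^2$ outright, while in the integrable case Theorem \ref{complexfinale} expresses $h^{0,2}=p_g$ and $h^{1,1}$ through $b^{\pm}$ and hence topologically. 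Either way every $h^{p,q}$ is written in terms of Betti numbers alone.

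I do not expect a genuine obstacle; the proof is short, and its only subtle ingredient is the integrality step, which is precisely what makes $b^1\leq 1$ the sharp threshold. For $b^1\geq 2$ the interval $[b^1/2,\,b^1]$ contains more than one integer, so $\mathfrak{q}$ is no longer forced and can vary with the almost complex structure; this is the point worth flagging in the writeup. If one prefers to state the corollary purely for non-integrable structures, as the surrounding discussion suggests, the third paragraph collapses to a single citation of Lemma \ref{noninth20}.
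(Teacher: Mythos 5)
Your proposal is correct and takes essentially the same route as the paper: the paper likewise pins down $\mathfrak{q}=b^1$ from the bound $b^1\leq 2\mathfrak{q}\leq 2b^1$ of Lemma \ref{h01h10} together with integrality of $\mathfrak{q}$, and then reads off the remaining $h^{p,q}$ from Lemma \ref{noninth20} plus Serre duality in the non-integrable case (and from Theorem \ref{complexfinale} in the integrable case). The only difference is expository: the paper presents the corollary as immediate from the preceding discussion, while you spell out the propagation to all $h^{p,q}$ and the integrable/non-integrable case split explicitly, which is a faithful completion of what the paper leaves implicit.
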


In general, we have:

\begin{prop}\label{semic}
On any compact almost complex 4-manifold, the irregularity $\mathfrak{q}$ is lower semi-continuous under small deformations of the underlying almost complex structure.
\end{prop}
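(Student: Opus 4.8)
The plan is to reduce the statement to an upper semicontinuity property for the kernel dimension of an elliptic operator, which is a standard fact. The starting observation is the degeneration identity $b^1 = h^{1,0}+h^{0,1}$ (from Lemma \ref{degeneration}) together with the fact that $b^1$ is a topological invariant, hence constant along any smooth family $\{J_t\}$ of almost complex structures. Writing $\mathfrak{q}=h^{0,1}=b^1-h^{1,0}$, lower semicontinuity of $\mathfrak{q}$ at $t=0$ is \emph{equivalent} to upper semicontinuity of $h^{1,0}$ at $t=0$, that is, to the inequality $h^{1,0}(J_t)\le h^{1,0}(J_0)$ for all $t$ close to $0$. This reduction is what makes the problem tractable: $E_2^{0,1}$ is a subquotient and hard to control directly, whereas $h^{1,0}$ is a genuine kernel.

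Next I would realize $h^{1,0}$ as the kernel dimension of a single elliptic operator. By Lemma \ref{degeneration} we have $h^{1,0}=\dim E_1^{1,0}$, and by Lemma \ref{holomclosed} together with Proposition 4.10 of \cite{CWDol},
\[ E_1^{1,0}\cong \{\alpha\in\Aa^{1,0} : \delb\alpha=\mub\alpha=0\}=\Ker\big(\Delta_\delb+\Delta_\mub\big)\cap\Aa^{1,0}. \]
Since $\mu$ and $\mub$ are of order zero (they are induced by the Nijenhuis tensor), the first-order operator $\delb+\mub\colon \Aa^{1,0}\to\Aa^{1,1}\oplus\Aa^{0,2}$ has the same principal symbol as $\delb$, namely $\alpha\mapsto \xi^{0,1}\wedge\alpha$. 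For a nonzero real covector $\xi$ one has $\xi^{0,1}\neq 0$, and wedging a $(1,0)$-form by a nonzero $(0,1)$-form is injective, so $\delb+\mub$ is overdetermined elliptic on $\Aa^{1,0}$. Its kernel equals that of the self-adjoint elliptic operator $(\delb+\mub)^*(\delb+\mub)$, and $h^{1,0}$ is exactly the dimension of this kernel.

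To run the deformation argument I would fix a smooth family of Hermitian metrics $g_t$ compatible with $J_t$ and regard $P_t:=\delb_t+\mub_t$ as a family of elliptic operators. The dimension of the kernel of a continuous family of self-adjoint elliptic operators (equivalently, the multiplicity of the zero eigenvalue of $P_t^*P_t$) is upper semicontinuous: eigenvalues depend continuously on $t$, so those positive at $t=0$ remain bounded away from $0$ for small $t$, and no new zero eigenvalue can appear. This yields $\dim\Ker P_t\le\dim\Ker P_0$ for $t$ near $0$, which is precisely the inequality needed for $h^{1,0}$, and hence the lower semicontinuity of $\mathfrak{q}$.

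The one point requiring care — and the main obstacle — is that the bidegree decomposition $\Lambda^1\otimes\CC=\Aa^{1,0}_t\oplus\Aa^{0,1}_t$, and therefore the domain and target bundles of $P_t$, vary with $t$. To apply the semicontinuity principle verbatim one must transport everything to fixed bundles: for $t$ small the subbundle $\Aa^{1,0}_t$ is a graph over $\Aa^{1,0}_0$, so there are smoothly varying bundle isometries $\Phi_t,\Psi_t$ identifying the domains and targets at parameter $t$ with those at $0$, and $\widetilde P_t:=\Psi_t P_t\Phi_t$ is then a genuine smooth family of elliptic operators between fixed Sobolev completions with $\dim\Ker\widetilde P_t=\dim\Ker P_t$; the standard result applies to $\widetilde P_t$. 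Verifying that $\Phi_t,\Psi_t$ and the operators $\delb_t,\mub_t$ depend smoothly on $t$, and that ellipticity holds uniformly near $t=0$, is routine but constitutes the technical heart of the argument.
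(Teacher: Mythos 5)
Your proposal is correct and takes essentially the same route as the paper: the paper likewise uses $\mathfrak{q}=b^1-h^{1,0}$ to reduce lower semicontinuity of $\mathfrak{q}$ to upper semicontinuity of $h^{1,0}=\dim\Ker(\delb+\mub)|_{(1,0)}$, and then invokes ellipticity of $\delb+\mub$ in bidegree $(1,0)$, with the same symbol as $\Delta_\delb$. The only difference is that you spell out the standard elliptic-family details (passing to the self-adjoint operator $(\delb+\mub)^*(\delb+\mub)$, and transporting the $t$-dependent domain and target bundles to fixed ones via smooth isometries) which the paper leaves implicit.
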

\begin{proof}
Since $\mathfrak{q}=b^1-h^{1,0}$ it suffices to prove that $h^{1,0}$ is upper semi-continuous.
This follows from the fact that 
 \[h^{1,0}=\dim \left( \Ker(\delb)\cap \Ker(\mub)|_{(1,0)} \right) = \dim \left( \Ker\left(\Delta_{\delb} +\Delta_{\mub} |_{(1,0)} \right) \right),\]
 and that in bidegree $(1,0)$, 
 $\Delta_{\delb} +\Delta_{\mub}$ is elliptic, with the same symbol as $\Delta_\delb$.
\end{proof}

In particular, since $\mathfrak{q}\leq b^1$, we have:

\begin{coro}\label{cntsmalldef}
If $\mathfrak{q}=b^1$ (and so $h^{1,0}=0$) then the irregularity and all Hodge-de Rham numbers remain constant under small deformations.
\end{coro}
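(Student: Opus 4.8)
The plan is to control the single non-topological invariant $\mathfrak{q}$ by a sandwich argument, and then to feed this into the fact that all Hodge-de Rham numbers are determined by the Betti numbers together with $\mathfrak{q}$ and $h^{2,0}$. The Betti numbers are invariants of the underlying smooth manifold, hence locally constant as $J$ varies. Combining the universal bound $\mathfrak{q}\leq b^1$ (immediate from $b^1=h^{1,0}+h^{0,1}$ and $h^{1,0}\geq 0$) with the lower semi-continuity of $\mathfrak{q}$ from Proposition \ref{semic}, I would conclude that for every $J'$ close to $J$,
\[ b^1=\mathfrak{q}(J)\leq \mathfrak{q}(J')\leq b^1, \]
so $\mathfrak{q}(J')=b^1$. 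Equivalently, $h^{1,0}=b^1-\mathfrak{q}$ is a non-negative integer vanishing at $J$, which the upper semi-continuity of $h^{1,0}$ (the actual content of Proposition \ref{semic}) keeps equal to $0$ nearby. Either way, $\mathfrak{q}$ is locally constant.

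I would then record, via Serre duality (Proposition \ref{SerreDuality}) and the degeneration identity $b^k=\sum_{p+q=k}h^{p,q}$, the explicit relations $h^{0,0}=h^{2,2}=1$, $h^{1,0}=h^{1,2}=b^1-\mathfrak{q}$, $h^{0,1}=h^{2,1}=\mathfrak{q}$, $h^{0,2}=h^{2,0}$, and $h^{1,1}=b^2-2h^{2,0}$. Since the $b^k$ are topological and $\mathfrak{q}$ is now constant, everything reduces to showing that $h^{2,0}$ is locally constant.

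This last reduction is the main obstacle: $h^{1,1}=\dim E_2^{1,1}$ is a second-page subquotient with no direct elliptic semi-continuity (recall that $E_1^{0,1}$ may even be infinite dimensional), so $h^{2,0}$ cannot be pinned down by a naive bound. I would instead exploit the hypothesis. If $b^1\leq 1$, the corollary following Lemma \ref{noninth20} already renders all Hodge-de Rham numbers topological, and there is nothing more to prove. If $b^1\geq 2$, then $h^{1,0}=0$ forces $J$ to be non-integrable, since by Theorem \ref{complexfinale} any integrable structure with $h^{1,0}=0$ has $b^1\leq 1$. Non-integrability is an open condition --- the bundle map $\mub$ is nonzero at some point, and this persists under small deformations --- so every nearby $J'$ is again non-integrable, whence $h^{2,0}(J')=0$ by Lemma \ref{noninth20}. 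Thus $h^{2,0}$ is locally constant, and together with the previous step all Hodge-de Rham numbers remain constant under small deformations.
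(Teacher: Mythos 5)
Your sandwich argument for $\mathfrak{q}$ is precisely the paper's proof: the paper derives the corollary in one line from Proposition \ref{semic} together with the bound $\mathfrak{q}\leq b^1$, and your version (lower semi-continuity pins $\mathfrak{q}(J')$ between $b^1$ and $b^1$) is the same. You are also right that this alone does not finish the job, since $h^{2,0}$ must still be controlled; the paper leaves this implicit because the corollary sits inside the subsection on \emph{non-integrable} structures, where Lemma \ref{noninth20} gives $h^{2,0}=h^{0,2}=0$ and $h^{1,1}=b^2$, so the only remaining issue is that nearby structures stay non-integrable. Your openness argument (the Nijenhuis tensor $\mub$ is nonzero at some point, and this persists under $C^1$-small perturbations) settles exactly that, and note that once $J$ is known to be non-integrable this argument works for \emph{every} value of $b^1$ --- your case split is unnecessary.

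The genuine gap is your $b^1\leq 1$ branch. The corollary following Lemma \ref{noninth20} is not unconditional: it inherits the non-integrability hypothesis of that subsection, and read unconditionally it is false. On the underlying smooth $4$-manifold of a K3 surface one has $b^1=0$, yet every complex structure has $h^{0,2}=1$ and $h^{1,1}=20$, while every non-integrable structure has $h^{0,2}=0$ and $h^{1,1}=22$ by Lemma \ref{noninth20}; so for $b^1\leq 1$ the numbers are topological only \emph{within} the non-integrable class. The same example shows that under your literal reading of Corollary \ref{cntsmalldef}, in which the center $J$ may be integrable when $b^1\leq 1$, the statement itself fails: the standard complex structure on K3 satisfies $\mathfrak{q}=b^1=0$ and $h^{1,0}=0$, but arbitrarily small generic perturbations are non-integrable and drop $h^{0,2}$ from $1$ to $0$. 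Hence ``nothing more to prove'' cannot be repaired as written: the corollary must be read with $J$ non-integrable (as its placement dictates), and then the $b^1\leq 1$ case is closed by your own openness argument (or, alternatively, by upper semi-continuity of $\dim E_1^{2,0}=\dim\Ker(\Delta_\delb+\Delta_\mub)\cap\Aa^{2,0}$, an elliptic kernel as in Proposition \ref{semic}, which vanishes at a non-integrable center), not by the topologicality corollary you cite.
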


\subsection{Obstructions to integrablility}

There are many known examples of almost complex 4-manifolds not admitting any integrable almost complex structure. The arguments for proving that such example exist often rely on Kodaira's classification of compact complex surfaces. We explain in this section how, in many situations, one can prove such results without invoking Kodaira's classification.

The Hodge-type filtration on a compact almost complex manifold induces a filtration of the complex 
de Rham cohomology $H^*:=H^*_{\dR}\otimes \CC$, given by:
\[F^1H^1:=\{[\alpha]; d\alpha=0, \alpha\in \Aa^{1,0}\}\subseteq F^0H^1:=H^1.\]
\[F^2H^2:=\{[\alpha]; d\alpha=0, \alpha\in \Aa^{2,0}\}\subseteq F^1H^2:=\{[\alpha]; d\alpha=0, \alpha\in \Aa^{1,1}\oplus\Aa^{2,0}\}\subseteq F^0H^2:=H^2.
\]
The filtration $F^*H^n$ is said to define a \textit{pure Hodge decomposition of weight $n$} if and only if
\[F^pH^n\oplus \overline{F}^qH^n\cong H^n\text{ for all }p+q-1=n.\]

\begin{prop}\label{Hodgedec}On any compact almost complex 4-manifold, $F$
induces a pure Hodge decomposition of weight 2 on $H^2$.
If moreover $h^{0,1}=h^{1,0}$, then $F$ also induces a pure Hodge decomposition of weight 1 on $H^1$.
\end{prop}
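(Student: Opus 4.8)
The plan is to deduce both pure Hodge decompositions from a dimension count together with a transversality (direct-sum) statement, substituting the available results for the classical Hodge symmetry. Recall that for a filtered complex the induced filtration on cohomology satisfies $\mathrm{gr}^p H^n \cong E_\infty^{p,n-p}$. Combining this with Lemma \ref{degeneration}, which ensures $E_\infty = E_2$ in every bidegree appearing here, I obtain $\dim F^2 H^2 = h^{2,0}$, $\dim F^1 H^2 = h^{2,0}+h^{1,1}$ and $\dim F^1 H^1 = h^{1,0}$. Since complex conjugation is dimension-preserving, and since by conjugation it suffices to treat one member of each conjugate pair of conditions, the required identity $F^p H^n \oplus \overline{F}^q H^n = H^n$ will follow in each case once I verify that the dimensions of the two summands add up to $b^n$ and that $F^p H^n \cap \overline{F}^q H^n = 0$.

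For weight $2$ the only nontrivial condition is $F^2 H^2 \oplus \overline{F}^1 H^2 = H^2$. The dimension count is automatic: the summands have dimensions $h^{2,0}$ and $h^{2,0}+h^{1,1}$, summing to $2h^{2,0}+h^{1,1} = b^2$ by Serre duality (Proposition \ref{SerreDuality}, $h^{0,2}=h^{2,0}$). For the intersection, I would take $[\alpha]$ represented by a $d$-closed $(2,0)$-form $\beta$ and also by $\overline{\gamma}$, where $\gamma = \gamma_{2,0}+\gamma_{1,1}$ is a $d$-closed form in $\Aa^{2,0}\oplus\Aa^{1,1}$. Conjugating $\beta-\overline{\gamma}=d\eta$ gives $\overline{\beta}=\gamma+d\overline{\eta}$, and in $\int\beta\wedge\overline{\beta}$ the terms $\beta\wedge\gamma_{2,0}$ and $\beta\wedge\gamma_{1,1}$ vanish for bidegree reasons (types $(4,0)$ and $(3,1)$ on a $4$-manifold), while $\int\beta\wedge d\overline{\eta}=0$ by Stokes' theorem using $d\beta=0$. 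Positive-definiteness of the pairing $\int\beta\wedge\overline{\beta}$ on $(2,0)$-forms (Lemma \ref{holomclosed}) then forces $\beta=0$.

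For weight $1$ the condition is $F^1 H^1 \oplus \overline{F}^1 H^1 = H^1$, and this is where the hypothesis $h^{0,1}=h^{1,0}$ enters: both summands have dimension $h^{1,0}$, so they add up to $b^1=h^{1,0}+h^{0,1}$ precisely when $h^{1,0}=h^{0,1}$. For the intersection, a class represented by a $d$-closed $(1,0)$-form $\beta$ and by the conjugate $\overline{\gamma}$ of a $d$-closed $(1,0)$-form satisfies $\beta-\overline{\gamma}=df$ for a function $f$; comparing bidegrees (note $\mu f = \mub f = 0$ on functions) gives $\beta=\del f$, whence $\delb\beta=0$ yields $\delb\del f=0$, so $f$ is constant by Lemma \ref{constant} and $\beta=0$.

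The routine parts are the dimension identifications, which follow from Lemma \ref{degeneration} and Serre duality. I expect the main obstacle to be the transversality step in weight $2$: one has to track exactly which bidegree components of $\gamma$ survive wedging against $\beta$ and confirm the Stokes cancellation of the exact term, after which the positivity of Lemma \ref{holomclosed} closes the argument. The weight-$1$ transversality is comparatively direct, reducing to the pluriharmonic maximum principle of Lemma \ref{constant}.
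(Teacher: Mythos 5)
Your proof is correct, but it takes a genuinely different route from the paper for the weight-$2$ statement. The paper splits into cases: for integrable structures it simply cites the known result for complex surfaces (Proposition 2.9 of \cite{BaHu}), while for non-integrable structures it invokes Lemma \ref{noninth20} to get $h^{2,0}=0$, so that $F^2H^2=0$ and the decomposition $H^2\cong F^1H^2\oplus \overline{F}^2H^2$ holds trivially. You instead give a uniform, case-free argument: the dimension count $\dim F^2H^2+\dim \overline{F}^1H^2 = 2h^{2,0}+h^{1,1}=b^2$ via the standard identification $\mathrm{gr}^p H^n\cong E_\infty^{p,n-p}$, Lemma \ref{degeneration} and Serre duality (Proposition \ref{SerreDuality}), combined with transversality forced by the positive-definite pairing $\int\beta\wedge\overline{\beta}$ on $(2,0)$-forms, with the exact term killed by Stokes and the cross terms killed for bidegree reasons --- in effect you extend the classical surface argument of \cite{BaHu} directly to the non-integrable setting rather than observing that non-integrability trivializes it. What your approach buys is self-containedness (no appeal to the literature for the integrable case, and no dependence on Lemma \ref{noninth20}); what the paper's approach buys is brevity, since Lemma \ref{noninth20} is already available and makes the non-integrable case immediate. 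Your weight-$1$ argument is essentially identical to the paper's: the paper routes the intersection-triviality through Lemma \ref{vanishes}, whose proof is exactly your inline computation $\beta=\del f$, $\delb\del f=0$, hence $f$ constant by Lemma \ref{constant}; both proofs use the same dimension count $2h^{1,0}=b^1$ under the hypothesis $h^{0,1}=h^{1,0}$.
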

\begin{proof}In the integrable case, this is Proposition IV.2.9 in \cite{BaHu}. Assume that the structure is non-integrable.
By Lemma \ref{noninth20} we have $F^2H^2=0$ and $H^2\cong  F^1H^2$ and so trivially we have
\[H^2\cong  F^1H^2\oplus \overline{F}^2H^2.\]
Assume now that $h^{0,1}=h^{1,0}$.
Since $h^{1,0}=\dim F^1H^1$ we have $H^1\cong F^1H^1+ \overline{F}^1H^1$. We next prove that the intersection is trivial.
Any class in $ F^1H^1\cap \overline{F}^1H^1$ can be represented by $\alpha_0\in \Aa^{0,1}$ and also by a form $\alpha_1\in \Aa^{1,0}$ with $d\alpha_0=d\alpha_1=0$ and $\alpha_0-\alpha_1=df$. 
Then $\alpha_0=\alpha_1=0$ by Lemma \ref{vanishes} and the fact that $\Aa^{0,1} \cap \Aa^{1,0} =0$.
\end{proof}

Pure Hodge structures are intimately related to formality. Recall that a differential graded algebra $(\Aa,d)$ is said to be \textit{formal} if it is connected to its cohomology $H^*(\Aa)$ by a string of quasi-isomorphisms of differential graded algebras, where $H^*(\Aa)$ carries the trivial differential. A topological space $X$ is then called \textit{formal} if its rational algebra of piece-wise forms $\Aa_{pl}(X)$ is a formal differential graded algebra. Note that by the descent of formality from $\CC$ to $\QQ$, in order to prove that a compact complex manifold is formal, it suffices to prove that its complex de Rham algebra is a formal differential graded algebra.

In \cite{DGMS} there are two completely different proofs for the formality of K\"{a}hler manifolds. The first one and most well-known uses the $\del\delb$-lemma. The second proof, called ``the principle of two types'' uses the fact that, for K\"ahler manifolds, we have 
degeneration at $E_1$ together with pure Hodge decompositions in cohomology. For a complex manifold, 
satisfying the $\del\delb$-lemma is equivalent to having pure Hodge decompositions together with degeneration at $E_1$. Note, however,
that while the $\del\delb$-lemma only makes sense in the the case of complex manifolds, Hodge decompositions and degeneration conditions
make sense in the abstract broader context of a commutative dg-algebra $\Aa$ defined over $\mathbb{R}$ together with a filtration $F$ on $\Aa\otimes_\RR\CC$. We have:

\begin{prop}\label{obstruction}Assume that a compact almost complex 4-manifold satisfies $E_1=E_\infty$ and $h^{0,1}=h^{1,0}$.
Then the manifold is formal.
\end{prop}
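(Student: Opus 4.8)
The plan is to obtain formality from the two hypotheses by the \emph{principle of two types}, that is, the second (algebraic) proof of formality of Kähler manifolds in \cite{DGMS}, carried out entirely in terms of the filtered commutative dg-algebra structure rather than a $\del\delb$-lemma. Since formality of a dg-algebra over $\RR$ is equivalent to formality of its complexification over $\CC$ (see \cite{DGMS}), I would first reduce to showing that $(\Aa^*, d) = (\Aa^*_{\dR}\otimes\CC, d)$ is formal over $\CC$, where both the filtration $F$ and its complex conjugate $\overline{F}$ are available. Recall that $F$ is multiplicative and satisfies $d(F^p\Aa^n)\subseteq F^p\Aa^{n+1}$, so that $(\Aa^*, d, F)$ is a genuine filtered commutative dg-algebra.

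Next I would upgrade the purity of Proposition \ref{Hodgedec} to every cohomological degree. That proposition already gives a pure Hodge decomposition of weight $2$ on $H^2$ and, under the hypothesis $h^{0,1}=h^{1,0}$, of weight $1$ on $H^1$; the degrees $H^0$ and $H^4$ are pure for trivial reasons, and weight-$3$ purity on $H^3$ follows from the weight-$1$ case via Poincar\'e duality together with the Serre identities of Proposition \ref{SerreDuality}, which under $h^{0,1}=h^{1,0}$ yield $h^{1,2}=h^{2,1}$. Thus $F$ induces a pure Hodge structure of weight $n$ on each $H^n$. Combined with the hypothesis $E_1=E_\infty$, which we read as degeneration of the spectral sequence of $(\Aa^*, d, F)$ at $E_1$, these are exactly the abstract conditions that play the role of the $\del\delb$-lemma.

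The final step is to run the two-types argument with $F$ and $\overline{F}$ replacing the bigrading by $(\del,\delb)$. Degeneration at $E_1$ is equivalent to $d$ being strictly compatible with $F$, and, since the underlying algebra is real and $d$ is real, also with $\overline{F}$; purity supplies the complementarity $F^pH^n\oplus\overline{F}^{\,n+1-p}H^n\cong H^n$ in each degree. These are precisely the inputs the principle of two types needs to construct a zig-zag of quasi-isomorphisms of dg-algebras relating $(\Aa^*, d)$ to its cohomology $(H^*, 0)$: strictness guarantees the comparison maps are quasi-isomorphisms, while purity guarantees that the differential induced on the terminal algebra vanishes. Descending back along $\RR\subseteq\CC$ then gives formality of the manifold.

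The main obstacle, and the reason the classical statement does not apply directly, is that in the non-integrable case $d=\mub+\delb+\del+\mu$ has four components, so $(\Aa^{*,*},\del,\delb)$ is \emph{not} a double complex and no $\del\delb$-lemma is available. The real content of the argument is thus to verify that the principle of two types uses only the filtration $F$, its conjugate, the $E_1$-degeneration, and the purity of $H^*$, none of which require $d$ to split as $\del+\delb$. The delicate point in carrying this out is checking that strictness of $d$ with respect to $F$ (equivalently $E_1=E_\infty$), and not merely the finiteness of the numbers $h^{p,q}$, is what allows the intermediate algebra in the zig-zag to be chosen so that both comparison maps are simultaneously quasi-isomorphisms.
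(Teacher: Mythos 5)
Your proposal is correct and takes essentially the same route as the paper: purity on $H^1$ and $H^2$ via Proposition \ref{Hodgedec}, purity on $H^3$ by duality, and then the principle of two types of \cite{DGMS} applied to the filtered de Rham algebra using the $E_1$-degeneration hypothesis. The additional details you spell out (descent of formality from $\CC$ to $\RR$, strictness of $d$ as the reformulation of $E_1=E_\infty$, and purity in the trivial degrees $0$ and $4$) are exactly what the paper leaves implicit in its appeal to the multiplicative pure Hodge complex framework.
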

\begin{proof}
Proposition \ref{Hodgedec} gives Hodge decompositions for $H^1$ and $H^2$, whenever $h^{0,1}=h^{1,0}$ and by Serre duality we also have a Hodge decomposition on $H^3$.
Together with the degeneration condition at $E_1$, this makes the de Rham algebra with the Hodge filtration,
into a multiplicative pure Hodge complex. One may then apply the principle of two types of \cite{DGMS} (see also the purity implies formality proof in \cite{CiHo}).
\end{proof}

By Theorem \ref{complexfinale}, the hypotheses of the above proposition are always satisfied 
for a compact complex surface with even first Betti number:
\begin{coro}\label{coroformal}
 Any compact complex surface with $b_1$ even is formal.
\end{coro}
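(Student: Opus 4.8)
The plan is to deduce this directly from Proposition \ref{obstruction}, whose conclusion is exactly formality, by checking that its two hypotheses hold for a compact complex surface with even first Betti number. Those hypotheses are degeneration of the Fr\"olicher spectral sequence, $E_1=E_\infty$, and the equality $h^{0,1}=h^{1,0}$. Both are already furnished by Theorem \ref{complexfinale}, so the corollary reduces to reading off the relevant parts of that theorem.

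First I would invoke the unconditional part of Theorem \ref{complexfinale}, which asserts that for \emph{any} compact complex surface the Fr\"olicher spectral sequence degenerates at $E_1$. In particular $E_1^{p,q}=E_\infty^{p,q}$ for all $p,q$, so the first hypothesis of Proposition \ref{obstruction} holds with no constraint on the Betti numbers. Next, using that $b^1$ is even, I would appeal to case (i) of Theorem \ref{complexfinale}, which records precisely the equality $h^{0,1}=h^{1,0}$ in the even case (the odd case (ii) instead gives $h^{0,1}=h^{1,0}+1$, which is why the parity assumption is essential). This supplies the second hypothesis.

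With both hypotheses verified, Proposition \ref{obstruction} applies verbatim and yields that the manifold is formal. I do not expect a genuine obstacle here: all of the analytic and Hodge-theoretic substance has already been packaged into Theorem \ref{complexfinale} (the even/odd dichotomy for $b^1$) and into the principle of two types underlying Proposition \ref{obstruction}. The only point that requires care is the bookkeeping that the parity assumption ``$b^1$ even'' lands exactly in case (i) of Theorem \ref{complexfinale} and thereby produces $h^{0,1}=h^{1,0}$ rather than the off-by-one relation of the non-K\"ahler case; once that is noted, the corollary follows immediately.
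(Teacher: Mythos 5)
Your proposal is correct and is exactly the paper's own argument: the paper deduces the corollary by noting that Theorem \ref{complexfinale} supplies both $E_1=E_\infty$ (unconditionally for complex surfaces) and $h^{0,1}=h^{1,0}$ (from case (i), using $b^1$ even), so that Proposition \ref{obstruction} applies. Your bookkeeping remark about the parity assumption landing in case (i) rather than the off-by-one case (ii) is precisely the right point of care.
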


\begin{exam}We list some examples where the above result applies:
\begin{enumerate}

\item Let $\mathbf{E}^3$ be a principal circle bundle over a torus $\mathbb{T}^2$, and $\mathbf{E}^4$ a principal circle bundle over $\mathbf{E}^3$. In \cite{FGG} it is shown that whenever $b^2(\mathbf{E}^4)=2$, such manifolds do not admit integrable structures, although they are symplectic. The proof in \cite{FGG} uses Kodaira's classification, while Corollary \ref{coroformal} recovers the same result.

 \item More generally, any compact 4-dimensional nilmanifold is known to be either a torus or non-formal \cite{Ha89}. So, other than a torus, a non-formal 4-dimensional nilmanifold with $b_1$ even does not admit any integrable almost complex structure.

\item A large number of parallelizable four manifolds having no complex structure are presented in \cite{Bro}. The argument uses Kodaira's classification together with the existence of non-trivial Massey products.
Corollary \ref{coroformal} reproves these results without invoking Kodaira's classification.

\end{enumerate}
\end{exam}

Note that in the case of non-integrable structures on a compact almost complex 4-manifold, by \cite{CPS} we always have $E_1\neq E_\infty$, and so the hypotheses of the above proposition imply as well that the manifold is a complex surface.
We end this section with the following observation:

\begin{rema}
The analytic invariant $\Ii:=\dim E_1^{0,1}-\dim E_1^{1,0}$
takes the values $0$, $1$ or $\infty$
according to K\"ahler surfaces, complex non-K\"ahler surfaces or non-integrable almost complex 4-manifolds respectively.
\end{rema}

\section{Examples}\label{SecExamples}

A large source of examples of almost complex manifolds arises in the setting of nilmanifolds, defined by means of an even dimensional real Lie algebra $\mathfrak{g}$ together with an endomorphism $J:\mathfrak{g}\to \mathfrak{g}$ such that $J^2=-1$. When the coefficients of $\mathfrak{g}$ are rational, this data gives a compact almost complex manifold $M=G/\Gamma$ where $Lie(G)=\mathfrak{g}$ and $\Gamma$ is a cocompact subgroup.
Moreover, there the algebra of left-invariant differential forms $\Aa_\mathfrak{g}\hookrightarrow \Aa_{\dR}\otimes \CC$ injects into the de Rham algebra, preserves the bidegrees and computes the de Rham cohomology of the manifold.
The inclusion $\Aa^{*,*}_\mathfrak{g}\hookrightarrow \Aa^{*,*}$ of left-invariant forms into all forms 
and by Nomizu's Theorem there are bidegree-preserving isomorphisms in cohomology
\[\bigoplus {}^LE_\infty^{p,q}\cong \bigoplus E_\infty^{p,q}.\]
Note, that the left-invariant  spectral sequence may degenerate at an earlier stage than the true spectral sequence.
Still, the above isomorphism gives:

\begin{lemm}
For a left-invariant almost complex structure on a nilmanifold, the irregularity $\mathfrak{q}$ and hence all Hodge-de Rham numbers may be computed using left-invariant forms only. 
\end{lemm}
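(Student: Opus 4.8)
The plan is to show that the irregularity $\mathfrak{q}=h^{0,1}$ can be computed from left-invariant forms, and then to invoke the earlier structural results to conclude that all other Hodge-de Rham numbers follow. Recall from Lemma \ref{h01h10} and Proposition \ref{SerreDuality} that $\mathfrak{q}=b^1-h^{1,0}$, and that $h^{1,0}=\dim\left(\Aa^{1,0}\cap\Ker(d)\right)$. Since Nomizu's Theorem already provides the bidegree-preserving isomorphism $\bigoplus {}^L\!E_\infty^{p,q}\cong\bigoplus E_\infty^{p,q}$, the Betti number $b^1$ is computed by left-invariant forms. Thus the entire difficulty is concentrated in showing that $h^{1,0}$, equivalently the space of $d$-closed $(1,0)$-forms, is computed by left-invariant representatives.

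First I would reduce $h^{1,0}$ to a problem about the harmonic theory of an elliptic operator. By Lemma \ref{degeneration} we have $E_1^{1,0}=E_\infty^{1,0}$, and by Proposition 4.10 of \cite{CWDol} the space $E_1^{1,0}$ is identified with $\Ker(\Delta_\delb+\Delta_\mub)\cap\Aa^{1,0}$, which as noted in the proof of Proposition \ref{semic} coincides with $\Ker(\delb+\mub)|_{(1,0)}$; this is the space of forms $\alpha\in\Aa^{1,0}$ with $\delb\alpha=\mub\alpha=0$, which by Lemma \ref{holomclosed} is exactly $\Aa^{1,0}\cap\Ker(d)$. The key point is that $\delb+\mub$ restricted to bidegree $(1,0)$ is an elliptic operator on the closed manifold $M$, with harmonic space of dimension $h^{1,0}$.

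Next I would apply the standard symmetry/averaging argument for homogeneous spaces. Since $M=G/\Gamma$ carries a left $G$-action preserving both the almost complex structure and an invariant Hermitian metric, the operator $\delb+\mub$ and its formal adjoint commute with this action, so its finite-dimensional kernel is a $G$-representation. Averaging any harmonic $(1,0)$-form over the compact homogeneous space (or using that the projection onto the harmonic space commutes with the $G$-action) produces a left-invariant harmonic representative in each class, giving a surjection ${}^L\!E_\infty^{1,0}\to E_\infty^{1,0}$; since the inclusion $\Aa^{1,0}_{\mathfrak g}\hookrightarrow\Aa^{1,0}$ is injective and preserves closedness, this is an isomorphism. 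Hence $h^{1,0}$ is computed by left-invariant forms, and therefore so is $\mathfrak{q}=b^1-h^{1,0}$.

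Finally, Lemma \ref{noninth20} and Proposition \ref{SerreDuality} express every remaining number $h^{p,q}$ in terms of the Betti numbers and $\mathfrak{q}$ (in the integrable case Theorem \ref{complexfinale} does the same), so once $\mathfrak{q}$ is known to be left-invariantly computable, all Hodge-de Rham numbers are as well. The main obstacle is the middle step: establishing rigorously that the harmonic projection for $\delb+\mub$ commutes with the $G$-action so that every $(1,0)$-cohomology class admits a left-invariant representative. This is the analytic heart of the argument, and it is precisely where one uses that $M$ is a quotient of a group by a cocompact lattice together with the ellipticity of $\delb+\mub$ in bidegree $(1,0)$.
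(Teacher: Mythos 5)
Your reduction of the lemma to the single claim that $h^{1,0}=\dim\left(\Aa^{1,0}\cap\Ker(d)\right)$ is computable from invariant forms is sound (and your identification of this space with $\Ker(\delb+\mub)|_{(1,0)}$ via Lemma \ref{holomclosed} is correct), but the averaging step at what you yourself call the analytic heart is genuinely broken. On a nilmanifold associated to a non-abelian $G$ there is \emph{no} transitive group action preserving the left-invariant almost complex structure and an invariant Hermitian metric: the translations that preserve these left-invariant tensors on $G$ do not descend to the compact quotient (they fail to commute with the deck transformations by $\Gamma$), while the transitive action that does descend moves invariant tensors by the adjoint action and preserves neither $J$ nor the metric in general. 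Moreover $G$ itself is noncompact, so one cannot average over the group either. Consequently the claim that the harmonic projection for $\delb+\mub$ commutes with a $G$-action has no valid justification, and your proposed isomorphism ${}^LE_\infty^{1,0}\to E_\infty^{1,0}$ is not established by this route.

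The correct substitute is Nomizu's averaging retraction $\rho:\Aa^*\to\Aa^*_{\mathfrak{g}}$, defined by integrating the evaluation of a form against the invariant vector fields over the compact quotient; it is a chain map, preserves the bigrading when $J$ is left-invariant, and satisfies $\rho\circ\iota=\mathrm{id}$. With $\rho$ in hand your plan closes without any elliptic theory: $\rho$ carries $d$-closed $(1,0)$-forms to invariant $d$-closed $(1,0)$-forms, surjectively because it is a retraction, and injectively because $\rho(\alpha)=0$ forces $[\alpha]=0$ in $H^1_{\dR}$ (as $\rho$ induces the inverse of $\iota_*$), so $\alpha=df$ with $\delb f=0$, whence $f$ is constant and $\alpha=0$ by Lemma \ref{constant}. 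But at that point you have essentially rederived the paper's own, much shorter, argument: the inclusion $\Aa^{*,*}_{\mathfrak{g}}\hookrightarrow\Aa^{*,*}$ is a bidegree-preserving filtered quasi-isomorphism by Nomizu's theorem, yielding ${}^LE_\infty^{p,q}\cong E_\infty^{p,q}$, and since $h^{p,q}=\dim E_2^{p,q}=\dim E_\infty^{p,q}$ by Lemma \ref{degeneration}, \emph{every} Hodge-de Rham number is read off directly from the finite-dimensional invariant complex, with no detour through $\mathfrak{q}=b^1-h^{1,0}$, Lemma \ref{noninth20}, or Theorem \ref{complexfinale}.
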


\subsection{Filiform nilmanifold}\label{fili}
Consider the real Lie algebra $\mathfrak{g}$ generated by $\{X_1, X_2, X_3, X_4\}$
with the only non-trivial Lie brackets given by 
\[
[X_1 ,X_2] = X_{3} \quad \textrm{and} \quad [X_1 ,X_3] = X_{4}.
\]
 Its Betti numbers are $b^1=b^2=2$ and so its associated compact nilmanifold does not admit any integrable structure by Corollary \ref{coroformal}. We have that  
 $\mathfrak{q}\in \{1,2\}$ for any almost complex structure on this nilmanifold, which
 give two Hodge-de Rham diamond types:
 \[
 \begin{tabular}{cccc}
\xymatrix@C=.1pc@R=.1pc{
 &&1\\
 &1&&1\\
 0&&2&&0\\
 &1&&1\\
 &&1
}&&& \xymatrix@C=.1pc@R=.1pc{
 &&1\\
 &0&&2\\
 0&&2&&0\\
 &2&&0\\
 &&1
}\\
\text{\small{Type 1}}&&& \text{\small{Type 2}}
\end{tabular}
\]

All related invariants are listed in the table below:
\begin{center}
\setlength\extrarowheight{5pt}
\begin{tabular}{lccccc}
 &$\mathfrak{q}$&$\widetilde\sigma$&$\sigma$&$\chi$&$p_g$\\\hline 
Type 1 (non-integrable)&1&0&0&0&0\\
Type 2 (non-integrable)&2&4&0&-1&0\\
\\
 \end{tabular}
 \end{center}
 
Note that for Type 1 we do have $\sigma=\widetilde\sigma$, 
but for Type 2 we only have 
$\sigma\equiv \widetilde\sigma\,(\mathrm{mod}\,4)$. Also, by Corollary \ref{cntsmalldef}, Hodge-de Rham diamonds of
Type 2 are constant under small deformations. Note that Type 1 diamond looks like the diamond of a compact K\"{a}hler surface. Its 
non-K\"{a}hlerness (and hence non-integrability) is uncovered by the failure of degeneration at $E_1$, which in turn is forced by Proposition \ref{obstruction}.

The above Hodge diamonds may be realized by left-invariant almost complex structures.
For instance, the almost complex structures $J_1$ and $J_2$ defined on $\mathfrak{g}$ by 
 \[J_1(X_1) = X_2,\, J_1(X_3)=X_4 \text{ and }J_2(X_1) = X_4,\, J_2(X_2)=X_3\]
 give Hodge-de Rham diamonds of Type 1 and 2 respectively.

Note that the left-invariant Fr\"{o}licher spectral sequence of any left-invariant almost complex structure of Type 1
will only degenerate at the second stage, in concordance with Proposition \ref{obstruction},
so ${}^LE_1\neq{}^LE_2=E_2=E_\infty$.
In contrast, Type 2 structures will always satisfy ${}^LE_1={}^LE_2=E_2=E_\infty$. In both cases, we have $^{L}E_1\neq E_1\neq E_\infty$.

\subsection{Kodaira-Thurston manifold} Consider the Kodaira-Thurston manifold 
\[
KT:=H_\Z \times \Z \backslash H \times \R
\]
where $H$ is the $3$-dimensional Heisenberg Lie group, $H_\Z$ is the integral subgroup, and the action is on the left. Its Betti numbers are $b^1=3$ and $b^2=4$ and so in this case we have $\mathfrak{q}\in \{2,3\}$, which give the Hodge-de Rham diamonds
\[\begin{tabular}{cccccc}
\xymatrix@C=.1pc@R=.1pc{
 &&1\\
 &1&&2\\
 1&&2&&1\\
 &2&&1\\
 &&1
}
&&
\xymatrix@C=.1pc@R=.1pc{
 &&1\\
 &1&&2\\
 0&&4&&0\\
 &2&&1\\
 &&1
}
&&
\xymatrix@C=.1pc@R=.1pc{
 &&1\\
 &0&&3\\
 0&&4&&0\\
 &3&&0\\
 &&1
}\\
\text{\small{Type 1}}&& \text{\small{Type 2}}&& \text{\small{Type 3}}
\end{tabular}
\]
and the associated invariants
\begin{center}
\setlength\extrarowheight{5pt}
\begin{tabular}{lccccc}
 &$\mathfrak{q}$&$\widetilde\sigma$&$\sigma$&$\chi$&$p_g$\\\hline 
Type 1 (integrable)&2&0&0&0&1\\
Type 2 (non-integrable)&2&-4&0&-1&0\\
Type 3 (non-integrable)&3&-8&0&-2&0\\
\\
 \end{tabular}
 \end{center} 
 
The Lie algebra of $H \times \R$ is spanned by $X,Y,Z,W$ with bracket $[X,Y] = -Z$.
The almost complex structures $J_1$ and $J_2$ defined on $\mathfrak{g}$ defined by 
\[J_1(X) = Y,\, J_1(Z) = W \text{ and } J_2(W) = X,\, J_2(Z) = Y\]
give left-invariant almost complex structures on $KT$ of Type 1 and 2 respectively.
Type 3 left-invariant structures are forbidden by dimensional reasons, since $\dim \Aa_{\mathfrak{g}}^{0,1}=2$.
In particular, Hodge-de Rham diamonds of Type 2 left-invariant structures are constant under small deformations.
Note that
Type 1 is integrable and so ${}^LE_1=E_1=E_\infty$, while Type 2 satisfies 
${}^LE_0={}^LE_1={}^LE_2=E_2=E_\infty$ and $E_1\neq E_2$.

\bibliographystyle{alpha}

\bibliography{biblio}

\end{document}